\newcommand{\eq}[1]{(\ref{#1})}
\newcommand{\la}{\lambda}
\newcommand{\La}{\Lambda}
\newcommand{\eps}{\varepsilon}
\newtheorem{Assumptiona}{Assumption}
\newtheorem{Definitiona}{Definition}
\newtheorem{Theorema}{Theorem}
\newcommand{\CC}{\mathbb{C}}
\newcommand{\II}{\mathbb{I}}
\newcommand{\RR}{\mathbb{R}}
\newcommand{\LL}{\mathcal L}
\newcommand{\De}{\mathcal D}
\newcommand{\UU}{\mathcal U}
\newcommand{\VV}{\mathcal V}
\newcommand{\Lin}{\tilde{\mathcal{L}}^{-1}}
\newcommand{\spanv}{\mathop{\mathrm{span}}} 
\newcommand{\gKer}{\mathop{\mathrm{gKer}}}
\newcommand{\Ker}{\mathop{\mathrm{Ker}}} 
\newcommand{\Ran}{\mathop{\mathrm{Ran}}}
\renewcommand{\Re}{\mathop{\rm Re}\nolimits}
\renewcommand{\Im}{\mathop{\rm Im}\nolimits}
\newcommand{\sign}{\mathop{\rm sign}\nolimits}
\newcommand{\nun}{n_{\text{\rm uns}}}
\newcommand{\hKer}{\mathop{\widehat{\mathrm{Ker}}}} 
\newcommand{\Haragus}{H{\u a}r{\u a}gu\c{s}}
\newcommand{\ml}{l\kern-0.035cm\char39\kern-0.03cm}
\newcounter{example}
\newenvironment{example}[1][]{\refstepcounter{example}\par\medskip\noindent%
 $\vartriangleleft$  \textit{Example~\theexample. #1} \rmfamily}{$\vartriangleright$ \medskip}
\begin{document}

\title{Index Theorems for Polynomial Pencils}
\author{Richard Koll\'ar, Radom{\'\i}r  Bos{\'a}k \\
{\small  Department of Applied Mathematics and Statistics} \\
{\small Faculty of Mathematics, Physics, Informatics, Comenius University} \\
{\small Mlynsk{\'a} dolina, 842 48 Bratislava, Slovakia} \\
{\small E-mail: kollar@fmph.uniba.sk}}
\date{\today}


%

\maketitle

\begin{abstract}
We survey index theorems counting eigenvalues of linearized Hamiltonian systems
and characteristic values of polynomial operator pencils. We present a simple common graphical interpretationand generalization  of the index theory using the concept of graphical Krein signature.
Furthermore, we prove that derivatives of an eigenvector $u= u(\la)$ of an operator pencil $\LL(\la)$ satisfying $\LL(\la) u(\la)= \mu(\la)  u(\la)$ evaluated at a characteristic value of $\LL(\la)$ do not only generate an arbitrary chain of root vectors of $\LL(\la)$ but the chain that carries an extra information.   
\end{abstract}

\section{Introduction}\label{sec:RKintro}

\emph{Spectral problems} naturally arise in investigations of 
stability and decay rates of nonlinear waves, in stability analysis of numerical schemes, in integrable systems solved via
inverse scattering method, and in multiple other fields.  The main interest is in presence of
\emph{unstable point spectrum}---points  in the point spectrum of the particular operator with a positive real part
that correspond to destabilizing modes. 
Over the last 40 years counts of unstable point spectra and other related counts that we commonly refer to as  \emph{index theorems}
appeared across  various distinct and unrelated fields due to their simple structure and importance for applications. 
Here we briefly survey literature on index theory, point out its common graphical interpretation, and derive 
its generalization to problems with operators with arbitrary structure of the kernel.

\emph{Linearized Hamiltonian systems.}
Index theorems proved to be particularly useful in spectral stability theory of waves in Hamiltonian systems where one studies the spectrum $\sigma(J\!L)$ of the non-selfadjoint problem
\begin{equation}
J\! L u = \nu u, \qquad\qquad  J = -J^{\ast}, \ L = L^{\ast},
\label{JL}
\end{equation}
where $J$ and $L$ are operators acting on a Hilbert space $X$, $L$ is the second variation Hessian of the underlying Hamiltonian, 
$L^{\ast}$ denotes the adjoint operator of~$L$, and $u \in X$ \cite{Swaters, Meiss}.
Problem \eq{JL} appears in search for exponentially growing or decaying solutions  $v(x,t) = e^{\nu t} u(x)$ 
of the linearized system $v_t = J\!L v$  obtained  by linearization of the Hamiltonian system around its equilibrium. 
Here $v(x,t)$ represents an infinitesimal perturbation of the equilibrium that is 
said to be spectrally stable if  \eq{JL} has no solution with $\Re \nu > 0$ for $u \in X$. 
Due to the natural symmetry of spectrum $\sigma(J\! L)$ (see \cite{KM}) the spectral stability is equivalent to the confinement 
of $\sigma(J\!L)$ to the imaginary axis. While positivity of the spectrum of $L$ implies spectral stability, 
the operator $L$ often has negative eigenvalues, and due to the symmetries of the system also a non-trivial kernel allowing 
an instability in the system. However, the symmetries through the Noether theorem
imply existence of conserved quantities (typically corresponding to physically meaningful quantities as mass, momentum, etc.). 
Their conservation restricts possible degrees of freedom in the system and thus can prohibit instability in cases of 
indefinite $L$. The index theorems can be applied in such situations as they relate the number of negative real eigenvalues of $L$ and the count of unstable spectra of $J\!L$. On the other hand, analogous index theorems proved to be useful in the quadratic operator pencils setting. The link between these two types of results is that for invertible $J$ it is possible to reformulate \eq{JL} 
as a linear operator pencil. Therefore, both types of index theorems can be viewed as special cases of a general theory for operator pencils. 

Two different ways to interpret the index theorems  mathematically can be traced in the literature. 
Motivated by the work of Hestenes \cite{Hestenes} (see also \cite{Gregory}) 
Maddocks \cite{Maddocks1988, MS1995} derived the dimension counts for finite-dimensional restricted quadratic forms 
and showed how the question of stability of an equilibrium of a Hamiltonian system reduces to 
a question whether a quadratic form is positive when restricted to a particular subspace of its domain.
Such an approach was later used in works \cite{GSS1,  Pel2005,KKS, CP, KapHar, BeTr2007} 
and it is closely related to the theory of indefinite inner product spaces \cite{Bognar, Iohvidov,Langer}. 
It provides a geometric visualization of the index theorems as counts of the dimension of the intersection of the
negative energy cone associated with the indefinite quadratic form with the subspace spanned by normal vectors
 (under the indefinite inner product) to hyperplanes tangential to surfaces of conserved quantities (Fig.~\ref{fig1}, left panel). 

However, here we focus on an alternative viewpoint of a different geometrical (graphical) nature
\cite{BinBrown1988, KollarH,KM}. We interpret the index theorems as topological counts of curves of eigenvalues of operator pencils 
in a plane (Fig.~\ref{fig1}, right panel). We believe that such an interpretation provides besides the simpler visualization of the theory 
also an easier way for generalizations. Additionally, as we will show, 
it also yields reduced algebraic formulae for calculation of the indices of operators with complicated generalized kernels
due to the fact that chains of root vectors generated by the graphical method carry an extra information compared to 
(generic) chains of root vectors.

\begin{figure}[t]
\includegraphics[scale=0.32]{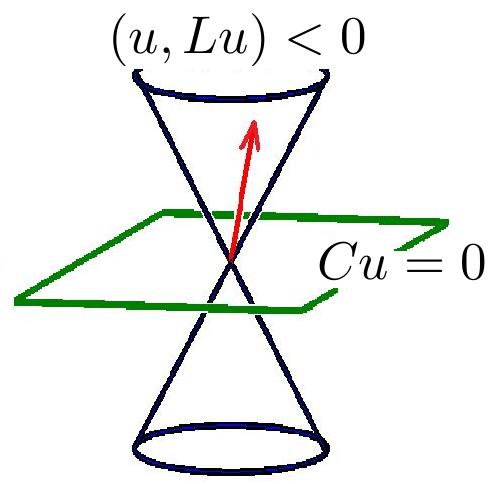}
\hspace{1cm}
\includegraphics[scale=0.3]{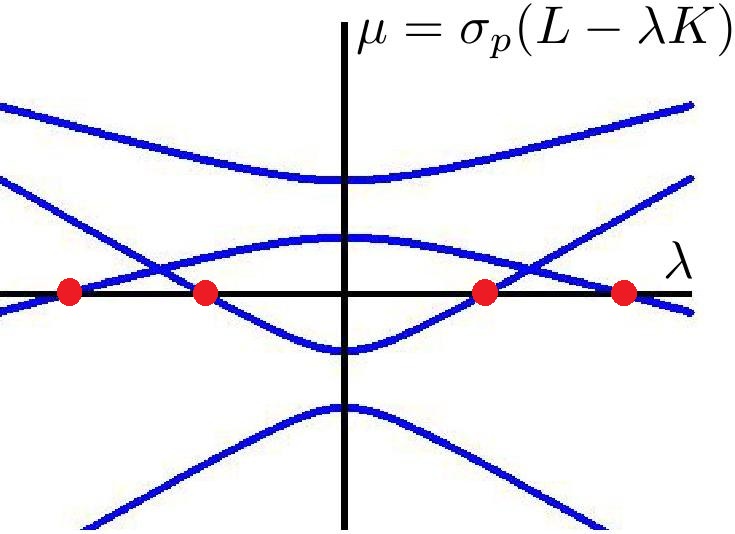}
\caption{\small{Visualization of the index theory. 
Left panel:  Algebraic approach. The equilibrium is (spectrally) stable if and only if  the normal vector
(in the associated indefinite inner product space) to the plane $Cu = 0$ corresponding to the invariant (conserved quantity) of the system lies in the negative energy cone ${\mathcal C} = \{u\in X, (u,Lu)<0\}$ \cite{Maddocks1988}.\newline
Right panel: Graphical approach. Eigenvalue branches $\mu = \mu(\la)$ (the point spectrum $\sigma_p(L-\la K)$) 
of the eigenvalue pencil $(L - \la K)u = \mu u$ are plotted vs.~$\lambda$.  
Purely imaginary eigenvalues $\nu$ of $J\! L$ correspond to intercepts of $\mu(\la)$ with the axis $\mu = 0$ via 
$\la = i \nu$ (indicated by full circles). Their Krein signature is given by the sign of $\mu'(\la)$ \cite{KM}.
\label{fig1}}}
\end{figure}

This work can be viewed as an extension of the theory developed by Koll{\'a}r and Miller \cite{KM} who laid down the groundwork 
 for the analysis and derived special cases of theorems presented here. Although both the present paper and \cite{KM}  
survey the literature on index theorems, these surveys are quite different. While \cite{KM} 
is exclusively focused on the literature appearing in the field of stability of nonlinear waves, 
here we use the opportunity to follow the idea of the BIRS workshop and bridge various different
fields of theoretical and applied mathematics where the results appeared parallely over the  years. 
The lack of such survey gathering results from various fields served as a motivation for Section~\ref{s:IT}.  We hope that our work will contribute to increased communication  and thus faster transfer of results between various fields in the future.

\section{Krein Signature}\label{sec:KS}

Index theorems often refer to \emph{Krein signature} of a characteristic value, a quantity that characterizes ability 
of the (stable) characteristic value to become unstable under a perturbation \cite{Krein1950}. In linearized Hamiltonian systems
the Krein signature $\kappa_L(\la)$ of a characteristic value $\nu$ of $J\!L$ 
captures the signature of the quadratic form  $(\cdot, L\, \cdot)$
representing the linearized energy on the invariant subspace spanned by root spaces corresponding to $(\nu, -\overline{\nu})$
(see MacKay \cite{MacKay} for the geometric visualization). Krein signature is also referred to as \emph{sign characteristics} \cite{GLR} within the context of operator pencils and \emph{symplectic signature} in Hamiltonian mechanics (see Kirillov \cite{Kirillov2009} for a detailed discussion of the terminology, literature survey, and extension of the results of \cite{MacKay}). If the signature of the quadratic form on the subspace is indefinite, the Krein signature is said to be \emph{indefinite}, 
otherwise it is \emph{definite (positive or negative)}. It is easy to see that the Krein signature of non-purely imaginary 
and non-semi-simple purely imaginary characteristic values is indefinite \cite{Pontryagin, Iohvidov}. 
On the other hand, the signature of any simple non-zero purely imaginary characteristic value of \eq{JL} is definite \cite{KP}.

If $J$ is invertible it is useful to define $K = (iJ)^{-1}$. 
Then  $Lu = i\nu K u$  for a simple purely imaginary characteristic value $\nu$ of $J\! L$ with the characteristic vector $u$ and 
\begin{equation*}
\kappa_L(\nu)  := \sign (Lu, u) = \sign i\nu (u, Ku)\, .
\end{equation*}
Thus the sign of  $(u, Ku)$ agrees with $\kappa_L(\nu)$ up to the sign of 
$i \nu$ and one can define
\begin{equation}
\kappa_K(\la) :=  -(u, Ku)\, , \qquad \mbox{for $\la := i\nu$,\quad $Lu = \la K u$.}
\label{kksig}
\end{equation} 
Since the definition \eq{kksig} is closely related to its graphical analogue, we will drop the index $K$ in \eq{kksig} in the rest of the paper where confusion will not arise.
Due to the rotation $\la = i\nu$ the main interest lies in Krein signature of $\la \in \RR$, so we limit ourselves to a definition of Krein signature of real  characteristic values of \eq{kksig}.  See \cite{KM} for the detailed definitions of a characteristic value of an operator pencil, its geometric and algebraic multiplicities, the definitions of the maximal chain of root vectors and 
the canonical set of maximal chains can be also found in \cite{GLR}. 

\begin{Definitiona}
Let $J$ be an invertible skew-adjoint and $L$ a self-adjoint operator on the Hilbert space $X$. 
Let $\la_0$   be a real characteristic value of
$iJ\!L$ and let $\mathcal U$ be one of its maximal chains of root vectors. 
Furthermore, let $K = (iJ)^{-1}$ and 
let $W$ be the  (Hermitian) Gram matrix  of the quadratic form $(\cdot, - K\, \cdot)$ on the span of ${\mathcal U}$.  
The number of positive (negative) eigenvalues of $W$ is called the positive (negative) Krein index 
of $\UU$ at $\la_0$ and is denoted $\kappa^+(\mathcal{U},\la_0)$ ($\kappa^-(\mathcal{U},\la_0)$).  
The sums of $\kappa^\pm(\mathcal{U},\la_0)$ over the canonical set of maximal chains of root vectors $\mathcal{U}$ 
of $\la_0$ are called the positive and negative Krein indices of $\la_0$ and are denoted $\kappa^\pm(\la_0)$.  Finally, 
$\kappa(\mathcal{U},\la_0):=\kappa^+(\mathcal{U},\la_0)-\kappa^-(\mathcal{U},\la_0)$ is called
the Krein signature of the maximal chain $\mathcal{U}$ for $\la_0$, and $\kappa(\la_0):=\kappa^+(\la_0)-\kappa^-(\la_0)$ 
is called the Krein signature of $\la_0$.  
\label{d:Ksig}
\end{Definitiona}
See \cite{KM, GLR} for the proper 
analogous definition of Krein indices and Krein signature of a real characteristic value of a self-adjoint operator pencil 
of various types (Hermitian matrix pencils, compact perturbations of identity, holomorphic families of type (A) \cite{Kato}, etc.).

Now we consider the spectrum of the operator pencil $\LL(\la) := L - \la K$, i.e., the set of $\mu = \mu(\la)$ 
for which there exists $u \in X$ such that 
\begin{equation}
\LL(\la) u = \mu u\, .
\label{Lmu}
\end{equation}
There is a natural one-to-one correspondence  (including partial multiplicities) of 
the real point spectrum of $iJ\!L$ and the set of real characteristic values of $\LL(\la)$, i.e., the set of $\la_0\in \RR$ such that $\LL(\la_0)$ has a non-trivial kernel \cite{GLR, Markus, KM}. 
Under suitable assumptions the eigenvalues $\mu(\la)$ and eigenvectors $u(\la)$ can be chosen to be real analytic in $\la$ \cite{KM} and 
it is possible to define the graphical Krein signature for a self-adjoint operator pencil \cite{KM, Markus}. 

\begin{Definitiona}
Let $\LL(\la)$ be a self-adjoint operator pencil. Assume that $\LL$ has an isolated real characteristic value $\la_0$ and there are real analytic eigenvalue branches $\mu(\la)$ of $\LL(\la)$ such that eigenvalues of  $\LL(\la)$ for $\la$ close to $\la_0$ are identical to $\mu(\la)$.  
Let $\mu=\mu(\la)$ be one of the branches with $\mu^{(n)}(\la_0)=0$ for $n=0,1,\dots,m-1$, and $\mu^{(m)}(\la_0)\neq 0$.   
Let $\eta(\mu):=\mathrm{sign}(\mu^{(m)}(\la_0))=\pm 1$.
Then the 
quantities 
\begin{equation}
\kappa^\pm_\mathrm{g}(\mu,\la_0):=\begin{cases}
\tfrac{1}{2}m,&\quad \text{for $m$ even,}\\
\tfrac{1}{2}(m\pm\eta(\mu)),&\quad\text{for $m$ odd,}
\end{cases}
\end{equation}
are called the positive and negative graphical Krein indices of the eigenvalue branch $\mu=\mu(\la)$ at  $\la_0$.  
The sums of $\kappa^\pm_\mathrm{g}(\mu,\la_0)$ over all eigenvalue branches crossing at $(\la,\mu)=(\la_0,0)$ are called the positive and negative graphical 
Krein indices of $\la_0$ and are denoted $\kappa^\pm_\mathrm{g}(\la_0)$. Finally, 
$\kappa_\mathrm{g}(\mu,\la_0):=\kappa_\mathrm{g}^+(\mu,\la_0)-\kappa_\mathrm{g}^-(\mu,\la_0)$ is called the graphical Krein signature 
of the eigenvalue branch $\mu=\mu(\la)$ vanishing at $\la_0$,
and $\kappa_\mathrm{g}(\la_0):=\kappa_\mathrm{g}^+(\la_0)-\kappa_\mathrm{g}^-(\la_0)$ is called the graphical Krein signature of $\la_0$.
\label{d:GKsig}
\end{Definitiona}

Definition~\ref{d:GKsig} extends to general self-adjoint operator pencils as long as smooth eigenvalue and eigenvector branches exist in a neighborhood of an isolated characteristic value $\la_0$ \cite{KM}. 
The fundamental  relation between the Krein signature and the graphical Krein signature of a real $\la_0$ \cite{KM, Markus, BinBrown1988}
is given by
\begin{equation}
\kappa_\mathrm{g}(\la_0) = \kappa_K(\la_0):= \kappa(\la_0)\, .
\label{kreinequiv}
\end{equation}  
The Krein signature $\kappa(\la_0)$ of a characteristic value $\la_0$ of $\LL = \LL(\la)$ then can be read off the graph
of spectrum of $\LL(\la)$ in the vicinity of $\la = \la_0$ and a maximal chain of root vectors of $iJ\!L$ at $\la_0$ can be generated by derivatives of the eigenfunction branch $u(\la)$ corresponding to the eigenvalue $\mu(\la)$ of \eq{Lmu} at $\la = \la_0$  \cite{Markus, KM, KollarH}. 
The relation \eq{kreinequiv} was rigorously established for Hermitian matrix pencils in \cite{GLR} 
and for self-adjoint holomorphic operator pencils of type (A) in \cite{KM}. 

\section{Index Theorems for Linear Pencils and Linearized Hamiltonians}\label{s:IT}

Let $X$ and $Y$  be separable Hilbert spaces and let $A$ be a densely defined 
operator $D(A) \subset X \rightarrow Y$. We denote $\sigma_p(A)$ the point spectrum of $A$ and $\nun(A)$  the unstable index of $A$ counting the number of points in $\sigma_p(A) \cap \{ \mbox{Re} (z) > 0\}$. 
Furthermore,  let $p(A)$, $z(A)$, and $n(A)$ be, respectively, the counts of positive, zero, 
and negative real points in  $\sigma_p(A)$ (counting multiplicity).

In 1972  Vakhitov and Kolokolov \cite{VK} studied stability of stationary  (in an appropriate reference frame)  solutions $\phi_\omega$ of a nonlinear Schr{\"o}dinger equation parameterized by angular velocity $\omega$. Their linear stability is characterized by the spectrum of the eigenvalue problem \eq{JL}. The \emph{Vakhitov-Kolokolov criterion} states that if 
$L_\pm$ are self-adjoint operators, $L_+$ is positive definite, $L_-$ has exactly one negative eigenvalue,
\begin{equation}
J = \left( \ \, \begin{matrix}  0 & - \II \\ \II & 0 \end{matrix} \right)\, ,\qquad
L = \left( \ \,  \begin{matrix}  L_+ & 0 \\ 0 & L_- \end{matrix} \right)\, ,
\label{canstr}
\end{equation}
then  
\begin{equation}
\nun(J\!L)  =  n(L) - n(dI/d \omega)  \, .
\label{VK1}
\end{equation}
 Here $n(L) = 1$ and $I(\omega) = \int \phi_{\omega}^2 \, dx$ is the charge (momentum) of the stationary solution, i.e., $dI/d\omega$ is a scalar. 
The quantity $dI/d\omega$ can be related to the sign of the derivative $D'(\la)$ at $\la=0$ of the Evans function \cite{PegoWeinstein} and to the quadratic form $(\cdot, L\,\cdot)$ evaluated at the first generalized characteristic vector of $J\!L$ associated with 
the root vector $\phi_{\omega}$.
Thus the count of unstable point spectra of $J\!L$ depends on the number related to the properties of the generalized kernel of $J\!L$ given by $\gKer (J\!L):= \spanv_{k\ge 0} \Ker \left((J\!L)^k\right)$.

Under the assumption of the full Hamiltonian symmetry $\{\la, -\la, \overline{\la}, -\overline{\la}\}$ of $\sigma_p(J\! L)$ 
it is straightforward to generalize \eq{VK1} to the parity index theorem:  $\nun(J\!L) - (n(L) - n(D))$ is an even non-positive integer. 
Here $n(D)$ is the count of negative eigenvalues of 
a matrix $D$ related to $\gKer(J\!L)$ (see Theorem~\ref{th:KKS}).  This parity index theorem was proved 
in 1987 in celebrated papers \cite{GSS1, GSS2} by Grillakis, Shatah, and Strauss who also proved the connection of spectral stability to nonlinear stability for a wide class of problems
(see also \cite[Chapter 4]{Sulem2} and \cite{Pel2012} for the survey of the stability results in the context of nonlinear Schr{\"o}dinger and KdV equations). The generalization of the parity index theorem for the operators related to stability of dispersive waves was proved by Lin \cite{Lin2008}. The parity index theorem plays also an important role in the theory of gyroscopic stabilization where it states that if the degree of instability (negative index) of the system is odd the equilibrium point cannot be stabilized by gyroscopic forces. This fundamental theorem is often referred to as Thomson theorem \cite{Thomson, Chetaev, KozKar2004}, Thomson-Tait-Chetaev theorem \cite[Chapter 6]{Merkin} and Kelvin's theorem \cite{Kozlov2009} (Lord Kelvin's original name was William Thomson). See the works of  Kozlov \cite{Kozlov1993, Kozlov2010}, Kozlov and Karapetyan \cite{KozKar2004}, and Chern \cite{Chern2002} for further results, applications, and references,  and Kozlov \cite{Kozlov2009} for the the topological implications of the theorem.

It is easy to identify $\nun(J\!L) = k_r + 2k_c$, where $k_r$ is the number of positive 
real points in $\sigma_p(J\! L)$ and $k_c$ is the count of points in 
$\sigma_p(J\! L) \cap \{ z\in \CC, \Re(z) >0, \Im (z) > 0\}$. 
Furthermore, denote
$$
k_i^{-} = \sum_{\nu \in \sigma_p(J\!L) \cap i\RR, i\nu < 0} \kappa_L^-(\nu)
= \sum_{\la \in \sigma_p(iJ\!L) \cap \RR^-} \kappa_K^- (\la)\, .
$$
The quantity  $k_i^-$ counts the total negative Krein index of points in $\sigma_p(J\!L)\cap i\RR^+$. 
The final form of the index theorem for linearized Hamiltonians was proved in 2004 independently by 
Kapitula, Kevrekidis and Sandstede \cite{KKS} and by Pelinovsky \cite{Pel2005} (some assumptions of \cite{Pel2005} were removed in \cite{VP2006}).

\begin{Theorema}[\cite{BinBrown1988,KKS, Pel2005}]
Let $J$ be an invertible skew-adjoint 
and  $L$ a self-adjoint operator acting  on a Hilbert space $X$, with $J^{-1}$ bounded on a subspace of $X$ of a finite codimension, 
$n(L) < \infty$, and $\sigma(L) \cap \{ x\in \RR, x \le 0\} \subset \sigma_p(L)$. 
Assume that the operators $J$ and $L$ satisfy (symmetry) assumptions that imply
the full Hamiltonian symmetry of $\sigma_p(J\!L)$. Also assume that
all Jordan chains corresponding to kernel of $J\!L$ have length two.
Let  $\VV = \gKer (J\!L) \ominus \Ker (L)$
and let $D$ be the (symmetric) matrix of the quadratic form $(\cdot, L \, \cdot)$ restricted to $\VV$. 
Then 
\begin{equation}
\nun(J\!L)  = k_r + 2k_c  = n(L) - n(D) - 2k_i^-\, .
\label{nLD}
\end{equation}
\label{th:KKS}
\end{Theorema}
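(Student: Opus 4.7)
My approach is to recast the eigenvalue problem for $J\!L$ as a self-adjoint operator pencil and combine the graphical Krein-signature identity \eqref{kreinequiv} with a Sylvester-type decomposition of the quadratic form $(\cdot, L\,\cdot)$ over the invariant subspaces of $J\!L$.

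First I would use invertibility of $J$ to set $K=(iJ)^{-1}$, which is self-adjoint since $J$ is skew-adjoint, and substitute $\la=i\nu$ to convert $J\!Lu=\nu u$ into the pencil equation $\LL(\la)u=0$ with $\LL(\la)=L-\la K$. Real characteristic values of $\LL(\la)$ then correspond to purely imaginary $\nu$; imaginary characteristic values on the positive imaginary axis correspond to the $k_r$ positive real $\nu$; and off-axis complex characteristic values correspond to the $2k_c$ complex-quadrant $\nu$. The full Hamiltonian symmetry of $\sigma_p(J\!L)$ transfers as invariance of the pencil spectrum under $\la\mapsto-\la$ and $\la\mapsto\overline{\la}$, which preserves Krein indices on the real axis and makes opposite-sign pairings available.

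The core step is a Sylvester-type identity for the negative inertia of $L$: decompose $X$ orthogonally into the spectral invariant subspaces of $J\!L$ and compute $n(L)$ block by block. On the two-dimensional invariant subspace of a real pair $\pm\nu_0$ the form $(\cdot,L\,\cdot)$ has signature $(1,1)$, contributing one negative direction; on each complex quadruple $\{\pm\nu_0,\pm\overline{\nu}_0\}$ the form has signature $(2,2)$, contributing two; on a purely imaginary pair $\pm\nu\in i\RR\setminus\{0\}$ it is definite with sign $\kappa_L(\nu)$, contributing $2\kappa_L^-(\nu)$; and on $\gKer(J\!L)$, which under the length-two Jordan chain hypothesis splits as $\Ker L\oplus\VV$, the form vanishes on $\Ker L$ and coincides with $D$ on $\VV$, contributing exactly $n(D)$. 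Summing over all invariant subspaces yields
\[
n(L)=n(D)+k_r+2k_c+2k_i^-,
\]
which rearranges to \eqref{nLD}.

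The signatures on each off-axis invariant subspace are most naturally obtained in the graphical Krein framework of Definition~\ref{d:GKsig}: each branch $\mu(\la)$ of $\LL(\la)$ crossing zero at a real $\la_0$ records the Krein signature of the corresponding $\nu\in i\RR$ via \eqref{kreinequiv}, while the $m$-th order vanishing at $\la_0=0$ matches the Jordan chain structure at $\nu=0$. A Pontryagin-space orthogonality argument then pins down the Gram-matrix signatures on complex invariant subspaces by noting that non-real characteristic values of $\LL(\la)$ arise from Krein collisions of imaginary $\la_0$ with opposite signatures. The main obstacle will be the block at $\la_0=0$: under the length-two chain hypothesis one must show that the branches emanating from the origin of $\LL(\la)$ produce exactly the signature of $D$, which requires relating $\mu''(0)$ of each length-two branch to the corresponding diagonal entry of $D$ after simultaneous orthogonalization of $(\cdot,K\,\cdot)$ on $\Ker L$ and $(\cdot,L\,\cdot)$ on $\VV$, and carefully isolating length-one kernel contributions, which pair up symmetrically by Hamiltonian symmetry, from the length-two ones that yield $n(D)$.
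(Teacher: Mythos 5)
Your argument is essentially correct in outline, but it is a genuinely different proof from the one this paper has in mind. The paper does not prove Theorem~\ref{th:KKS} at all: it is quoted from \cite{BinBrown1988,KKS,Pel2005}, and the paper's own contribution is to recover a finite-dimensional generalization of it from the graphical machinery of Section~\ref{s:KM} --- the quadrant count of eigencurves $\mu(\la)$ of the pencil $\LL(\la)=L-\la K$ in Theorem~\ref{thm:index}, the asymptotic counts $Z^{\downarrow}_{\pm\infty}$ from Theorem~\ref{th:table} (which for $g\equiv 0$, $p=1$ reduce to inertias of $\pm K$), and the identification of $Z^{\downarrow}_{0^{\pm}}$ with $n(D)$-type quantities via Theorem~\ref{Th:kernel}. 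In that route $n(L)=n(\LL(0))$ enters as the number of branches negative at $\la=0$ and the Krein signatures appear as signed crossings of the axis $\mu=0$. What you propose instead is the algebraic inertia (Sylvester) decomposition of $(\cdot,L\,\cdot)$ over the root subspaces of $J\!L$, which is the route of \cite{GSS1,KKS,Pel2005,CP}: neutrality of the root spaces of eigenvalues off both axes gives the $(k,k)$ signatures, definiteness on simple imaginary pairs gives $2k_i^-$, and $\gKer(J\!L)$ gives $n(D)$. Both are legitimate; the graphical route buys the local and asymptotic refinements (Theorems~\ref{thm:local} and~\ref{th:table}) and extends painlessly to higher-order pencils, while yours is closer to the cited original proofs.

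Two points in your write-up need repair. First, the crux of the algebraic proof is the line ``summing over all invariant subspaces yields $n(L)=n(D)+k_r+2k_c+2k_i^-$'': this requires that the root subspaces of $J\!L$ exhaust the negative cone of $L$, i.e.\ that the $L$-orthogonal complement of the span of all root subspaces carries a positive form. In infinite dimensions this is exactly where the Pontryagin invariant subspace theorem (or a Keldysh-type completeness statement) must be invoked, and it is the hard step of \cite{KKS,Pel2005,CP}; without it your decomposition only yields an inequality. Your single mention of a ``Pontryagin-space orthogonality argument'' is aimed at the wrong place (the signature of the off-axis blocks, which is elementary neutrality). Second, the closing paragraph is muddled: under the stated hypothesis \emph{all} Jordan chains over $\Ker(J\!L)$ have length two, so there are no length-one contributions to isolate; and the eigencurve picture only records purely imaginary $\nu$ (real $\la$), so it cannot ``pin down'' the signatures of the genuinely complex quadruples --- those must come from the neutrality lemma, not from Krein collisions.
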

Note that the count \eq{nLD} was already established in 1988 by 
Binding and Browne  \cite[Proposition 5.5]{BinBrown1988}
(although the $n(D)$ term is calculated a different way when $z(L)>0$).
They considered  the case of $L$ semi-bounded with compact resolvent and $J$ 1-to-1 
and used the standard perturbation theory combined with the graphical Krein signature 
referred to as  \emph{two parameter spectral theory}. 
Their beautiful short and simple argument is based on graphical inspection of eigenvalues branches (eigencurves) that can 
be interpreted as a homotopy in the parameter $\mu_0$ from $L + \mu_0 \II$ 
positive definite to $L$ indefinite and counting of the eigenvalue branches intersections with 
the axis $\mu = 0$ (see also \cite{BinVol1996} where applications in Sturm-Liouville theory are studied). Furthermore, the claim and the proof of Theorem~\ref{th:KKS} is in some extent implicitly present in the theory developed by Iohvidov \cite{Iohvidov}, Langer \cite{Langer}, and also in Bognar \cite[Section XI.4]{Bognar}.

For the operators arising in the spectral theory of Sturm-Liouville problems with indefinite weight 
an index theorem \eq{nLD} plays an important role. While the upper bound of $\nun(J\!L)$ is well understood 
\cite[Theorem 5.8.2]{Zettl}, the exact count and dependence of its individual factors on the coefficients of the underlying differential equation poses an important open problem \cite[Problems IX--X, p.~300, Problem 1, p.~124, Comment (7), p.~128]{Zettl}.

Koll{\'a}r and Miller \cite{KM} gave a short graphical proof of the index theorem \eq{nLD} for Hermitian matrix pencils. 
We further generalize their results in Section~\ref{s:KM} and derive the generalization of a finite-dimensional version of Theorem~\ref{th:KKS}. 
Kapitula and \Haragus~\cite{KapHar} proved the analogue of \eq{nLD} for periodic Hamiltonian systems
using the Floquet theory (Bloch wave decomposition) 
under technical assumptions related to the Keldysh theorem that guarantees completeness of the eigenvectors for \eq{JL}. 
Some of the technical assumptions of \cite{KapHar} were later removed in \cite{DK2010}. 
See also \cite{BJK} for an alternative proof of \eq{nLD} based on the integrable structure of the underlying problem. 
Recently, Stefanov and Kapitula \cite{StefKap} and Pelinovsky \cite{Pel2012} removed the assumption  of boundedness of $J^{-1}$ and proved \eq{nLD} 
for the case covering the Korteweg-de Vries-type problems with $J = \partial_x$  under the assumption $\dim (\Ker L) = 1$ (see \cite{Pel2012} for historical discussion of the  stability results). 
Chugunova and Pelinovsky \cite{CP} studied the generalized eigenvalue problem $Lu = \la Ku$ using the theory 
of indefinite inner product spaces and particularly Pontryagin Invariant Subspace Theorem and proved counts (inertia laws) 
analogous to \eq{nLD}. Furthermore they showed how can be \eq{JL} treated within that context and provided an alternative proof of \eq{nLD}. 

In 1988 Jones \cite{Jones1988} and Grillakis \cite{Grillakis1988} independently proved the index theorem bounding the number of unstable points in $\sigma_p(J\!L) \cap \RR^+$ from below for the systems with the canonical form \eq{canstr}. 

\begin{Theorema}[\cite{Jones1988,Grillakis1988}]
Let $J$ and $L$ have the canonical structure \eq{canstr} with $L_\pm$ self-adjoint on a Hilbert space $X$,  $\Ker(L_+) \perp \Ker (L_-)$, and let $V$ denote the orthogonal complement of $\Ker(L_+) \oplus \Ker (L_-)$ in $X$ with the orthogonal projection $P: Y \rightarrow V$. 
Then 
\begin{equation}
\nun(J\!L) \ge k_r \ge  |n(PL_+ P) - n(PL_-P)|\, . 
\label{eq:J}
\end{equation} 
\label{th:J}
\end{Theorema}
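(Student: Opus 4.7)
The first inequality $\nun(J\!L)\ge k_r$ is immediate, since each real positive eigenvalue contributes to $\nun$. The substantive content is the second inequality, and my plan is to reformulate the search for positive real eigenvalues of $J\!L$ as an index-tracking problem for a one-parameter family of self-adjoint operators.

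Writing $u=(u_1,u_2)^T\in X\oplus X$, the equation $J\!Lu=\nu u$ is equivalent to the pair $L_+ u_1=\nu u_2$ and $L_- u_2=-\nu u_1$, which together constitute the null-space condition for the self-adjoint linear pencil
\[
\mathcal{S}(\nu):=\begin{pmatrix}L_+ & -\nu I \\ -\nu I & -L_-\end{pmatrix}.
\]
Thus $k_r$ equals the number of positive $\nu$ (counted with geometric multiplicity) at which an eigenvalue branch of $\mathcal{S}(\nu)$ crosses zero. I would then track the negative Morse index $n(\mathcal{S}(\nu))$ as $\nu$ increases from $0$ to $+\infty$. The orthogonality hypothesis $\Ker L_+\perp\Ker L_-$ is used to factor out kernel directions: any eigenvector for a positive real $\nu$ must avoid $\Ker L_+\oplus\Ker L_-$ (since $\nu>0$ forces $L_+u_1\ne 0$ and $L_-u_2\ne 0$ unless $u\equiv 0$), so the counting reduces to the projected pencil on $V\oplus V$ with $L_\pm$ replaced by $PL_\pm P$.

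At $\nu=0$ the projected pencil is block diagonal $\mathrm{diag}(PL_+P,-PL_-P)$, with negative index $n(PL_+P)+p(PL_-P)$, where $p(\cdot)$ denotes the positive inertia. As $\nu\to+\infty$ the off-diagonal $-\nu I$ dominates the diagonal blocks and each two-dimensional coordinate block becomes $\bigl(\begin{smallmatrix}a&-\nu\\-\nu&-b\end{smallmatrix}\bigr)$ with determinant $-ab-\nu^2<0$ for large $\nu$, hence contributes exactly one positive and one negative eigenvalue. Summing, the asymptotic negative index on $V\oplus V$ equals $\dim V$. Using the identity $\dim V=n(PL_+P)+p(PL_+P)=n(PL_-P)+p(PL_-P)$, the net change in the Morse index over $(0,\infty)$ telescopes to $n(PL_-P)-n(PL_+P)$. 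Since the index is locally constant away from the singular set of $\mathcal{S}(\nu)$, changing only at real eigenvalues of $J\!L$ and by at most the multiplicity of the zero crossing, the total count satisfies $k_r\ge|n(PL_+P)-n(PL_-P)|$.

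The main obstacle will be making the large-$\nu$ asymptotic rigorous in infinite dimensions, where $L_+$ and $L_-$ need not commute and may be unbounded, so simultaneous diagonalization is unavailable and one must verify that only finitely many eigenvalue branches of $\mathcal{S}(\nu)$ traverse zero on $(0,\infty)$. A natural remedy is a homotopy: continuously deform $L_-$ within a family $L_-^{(t)}$ to a strictly positive operator while holding $L_+$ fixed, and track the finitely many negative eigenvalues of the resulting $\mathcal{S}(\nu;t)$ (a finite set thanks to $n(L_\pm)<\infty$) throughout the deformation; the endpoints yield inertias computable directly and any intervening sign change corresponds to a zero-crossing. This reasoning fits cleanly with the graphical philosophy advocated in Section~\ref{s:IT}, since $\mathcal{S}(\nu)$ is itself a self-adjoint operator pencil whose zero-crossings we are counting, and the asserted bound becomes a straightforward comparison of endpoint inertias along the $\nu$-axis.
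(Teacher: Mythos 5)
The paper itself contains no proof of Theorem~\ref{th:J}: it is a surveyed result credited to \cite{Jones1988,Grillakis1988}, and the route the paper points to is different from yours, namely inverting $L_+$ as in \cite{KapProm} to reduce \eq{JL} to a generalized eigenvalue problem and then invoking the graphical index theory of \cite{KM} (Theorems~\ref{thm:index}--\ref{th:table}). Your own plan --- encode the real spectrum of $J\!L$ as the characteristic values of the self-adjoint linear pencil $\mathcal{S}(\nu)$ and bound the number of zero-crossings on $(0,\infty)$ below by the net change of the Morse index --- is sound in outline; it is exactly an application of the local index theorem (Theorem~\ref{thm:local}) to $\mathcal{S}$, together with the bound $|\kappa(\nu_0)|\le\dim\Ker\mathcal{S}(\nu_0)=\dim\Ker(J\!L-\nu_0)$, and it is close in spirit to Grillakis's original argument. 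The first inequality and the large-$\nu$ inertia count (in finite dimensions) are fine.

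The genuine gap is the passage to the compressed pencil on $V\oplus V$. That subspace is not invariant under $\mathcal{S}(\nu)$, so replacing $L_\pm$ by $PL_\pm P$ produces a \emph{different} pencil whose zero-crossings need not coincide with those of $\mathcal{S}(\nu)$. Your justification shows only that an eigenvector for $\nu>0$ satisfies $u_1\notin\Ker(L_+)$ and $u_2\notin\Ker(L_-)$; it does not show $(u_1,u_2)\in V\oplus V$ (indeed $u_1$ may have a nonzero component along $\Ker(L_-)$, and vice versa), so eigenvectors of $\mathcal{S}(\nu)$ are lost and spurious kernel vectors of the compression may be gained. Consequently the endpoint inertia $n(PL_+P)+p(PL_-P)$ you use at $\nu=0$ belongs to the compressed pencil, not to $\mathcal{S}(0^+)$. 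For the honest pencil one has $n(\mathcal{S}(0))=n(L_+)+p(L_-)$ plus $z(L_+)+z(L_-)$ zero branches at $\nu=0$ whose splitting for small $\nu>0$ must be determined; the first-order splitting form on $\Ker(L_+)\oplus\Ker(L_-)$ is built from inner products $\Re(a,b)$ with $a\in\Ker(L_+)$, $b\in\Ker(L_-)$ and vanishes identically by the hypothesis $\Ker(L_+)\perp\Ker(L_-)$, forcing a second-order computation (the operator $H_2$ of Theorem~\ref{Th:kernel}); this is precisely where the projections $P$ must do their real work, and your argument never reaches it. A secondary issue: the identity $\dim V=n(PL_\pm P)+p(PL_\pm P)$ presumes $PL_\pm P$ is nondegenerate on $V$, which the hypotheses do not guarantee (any $v\in V$ with $L_+v\in\Ker(L_-)$ gives $PL_+Pv=0$). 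To repair the proof you must either carry out the second-order analysis of the zero branches of $\mathcal{S}$ at $\nu=0$, or adopt the reformulation via $L_+^{-1}$ that the paper indicates.
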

The proofs in \cite{Grillakis1988} and in \cite{Jones1988} are significantly different, with the method 
of Grillakis \cite{Grillakis1988} related to the graphical Krein signature. 
Note that Theorem~\ref{th:J} does not rely on completeness of the root vectors of $J\!L$. 
Theorem~\ref{th:J} is frequently used to establish instability of various nonlinear waves, particularly in situations when the negative spectrum and the kernel of $L_{\pm}$ are explicitly known.

Kapitula and Promislow  \cite{KapProm} reproved Theorem~\ref{th:KKS} using the theory of \cite{Maddocks1988} 
for constrained Hamiltonian systems and the Krein matrix theory, and reformulated \eq{JL}, \eq{canstr}  
by inverting the operator $L^+$ reducing \eq{JL} to a generalized eigenvalue problem for which they established \eq{eq:J}. They also proved a local count theorem analogous to Theorem~\ref{thm:local} of Section~\ref{s:KM}.
Note that Theorem~\ref{th:J} can be also easily obtained as a corollary of a general result of \cite{KM} 
(see Section~\ref{s:KM}) by using the same reformulation as in \cite{KapProm}. 
Both counts \eq{nLD} and \eq{eq:J} were in a more general context also derived by Cuccagna {\it et al.} 
in \cite{CPV2005} in the setup allowing the point spectrum to be embedded in the essential spectrum under some further technical assumptions. 
A lower bound for the number of real eigenvalues for Hermitian matrix pencils was derived 
by Lancaster and Tismenetsky \cite{LT1983} 
together with various other index theorems for perturbed Hermitian matrix pencils (including the upper bound for $\nun$). 
Also, see Grillakis \cite{Grillakis1990} for the analysis of the case $n(PL_+P) = n(PL_-P)$.   

Quadratic eigenvalue pencils and their spectrum are a well-studied subject with a large number of applications (see \cite{GK, tm}, and references therein). Particular areas where index theorems naturally appear are Sturm-Liouville problems \cite{BeKaTr2009} 
and gyroscopic stabilization. Gyroscopic stabilization and stability of quadratic operator pencils in general 
are related to the point spectrum of the pencil $\LL(\la) = \la^2 A + \la (D + iG) + K +iN$, where the coefficients $A, D, G, K, N$ are self-adjoint operators (see \cite{Kirillov, Merkin, Kozlov2010} and references therein) under various additional conditions for the coefficients. A survey of all important results in this area exceeds the scope of this paper and thus here we list only a few of references. 
Fundamental results for quadratic operator pencils were obtained by Krein and Langer \cite{KLa,KLb} and later extended by Adamyan and Pivovarchik \cite{AP} who also proved an index theorem similar to \eq{nLD}. 
Results that can be expressed in a form of an index theorem were obtained also by Lancaster and his coworkers \cite{LMZ2003, lz},  by Wimmer \cite{Wimmer1975}, and Chern \cite{Chern2002}  (see also reference therein).
Important index theorems for systems with dissipation $D >0$ and partial dissipation $D \ge 0$ were proved in \cite{Zajac,Wimmer1974,Kozlov1993}. Results of Zajac \cite{Zajac} that generalized the Thompson theorem for quadratic matrix pencils were later extended to operator setting by Pivovarchik \cite{Pivo1991}. 

Within the field of stability of nonlinear waves the index theorems for quadratic eigenvalue pencils are a fairly new subject. 
Chugunova and Pelinovsky \cite{ChagPel} proved the count analogous to \eq{nLD} for the quadratic Hermitian matrix pencils of the form 
$\la^2 \II + \la L + M$, where $M$ has either zero or one dimensional kernel (under a further non-degeneracy condition) via an application of the Pontryagin Invariant Subspace Theorem. Their results were reproved and extended in \cite{KollarH,KM} (see Example~\ref{ex:0}). Bronski, Johnson, and Kapitula \cite{BJK2} proved a count similar to \eq{nLD} for  the quadratic operator pencils $\LL(\la) = A + \la B + \la^2 C$,  where $A$ and $C$ are self-adjoint and $B$ is invertible skew-symmetric extending results of \cite{Pivo2007}, \cite{Shkalikov}, and \cite{Lyong}. 

Specific counts of eigenvalues for a particular class of Sturm-Liouville operators given by $J\!L$ with $J = \sign(x)$ and $L=  -d^2/dx^2 + V(x)$ in $L^2(\RR)$ were obtained in \cite{KaKoMa, BeKaTr2009}. Various types of index and eigenvalue localization theorems for definite and indefinite Sturm-Liouville problems, and particularly those that correspond to defective symmetric operators, 
can be found in \cite{BeTr2007, BeMoTr2011} and in multiple reference therein, see also Binding and Volkmer \cite{BinVol1996} where non-real spectra of $J\!L$ is studied with the use of graphical Krein signature. Further bounds particularly related to graphical Krein signature and eigenvalue branches $\mu(\la)$ (see Section~\ref{s:KM}) 
were derived in \cite{BinBrown1988}.  The local count referred to as the \emph{Krein oscillation theorem}  
was proved within the context of index theorems by Kapitula \cite{Kap} using Krein matrix theory. 
An infinitesimal version of the (local index) Theorem \ref{thm:local} for matrices is proved in \cite[Theorem 12.6]{GLR}. 

The theorem guaranteeing existence of a sequence of points in spectrum converging to zero 
for a general class of operator pencils with compact self-adjoint non-negative coefficients was proved in \cite{KollarH} by a simple homotopy argument 
as a generalization of the results of \cite{GKP}  (see also references therein). 
A homotopy argument was also used by  Maddocks and Overton \cite{MO1995} to  prove the index theorem for dissipative perturbations of  Hamiltonian systems. Index theorems within the context of isoperimetric calculus of variations were proved in \cite{GMR}.
Bronski and Johnson \cite{BJ2009} derived an index theorem for the Faddeev-Takhtajan problem by an approach analogous to work of Klaus and Shaw \cite{KS2002, KS2003} on the Zakharov-Shabat system. 
Also, Kozlov and Karapetyan \cite{KozKar2004} established the index theorem  for finite dimensional Hamiltonian systems that bounds the stable index of the system from below and connected the result to gyroscopic stabilization.

To enclose the historical review of results on index theorems let us point out that an unusually large part of the work mentioned 
within this section can be traced back to the University of Maryland at College Park, where many of the papers were written and 
many of the ideas were born. J.~H.~Maddocks, I.~Gohberg, L.~Greenberg, C.~K.~R.~T.~Jones, M.~Grillakis, R.~L.~Pego, and one of the authors of this manuscript (R.~K.) were among the others who were involved in the development of the theory.

\section{Graphical Interpretation of Index Theorems}\label{s:KM}

Within this  section we derive index theory that  encompasses Theorems~\ref{th:KKS} and \ref{th:J} and demonstrates their graphical nature. While Theorem~\ref{thm:index} was derived in \cite{KM} 
our main results contained in Theorems~\ref{thm:local} and \ref{th:table} generalize the theory developed in \cite{KM}. The analysis is for simplicity performed for matrix pencils although the results under specific assumptions can be generalized to infinitely dimensional setting \cite{BinBrown1988} in a straightforward manner.

\begin{Definitiona}
Let $\LL = \LL(\la)$ be a Hermitian matrix pencil real analytic in $\la$, and $\la_0$ its real characteristic value. 
Let $Z^{\downarrow}_{\lambda_0^-} = Z^{\downarrow}_{\lambda_0^-}(\LL)$ and 
$Z^{\downarrow}_{\lambda_0^+} = Z^{\downarrow}_{\lambda_0^+}(\LL)$
denote the number (counting multiplicity) of eigenvalue curves $\mu = \mu(\la)$ of  $\LL(\la)$ with $\mu(\la_0) = 0$ 
and $\mu(\la) < 0$ for  $\la \in (\la_0 - \eps, \la_0)$, respectively for $\la \in (\la_0, \la_0 + \eps)$, for a sufficiently small $\eps >0$. 
Similarly, let $Z^{\downarrow}_{-\infty} = Z^{\downarrow}_{-\infty}(\LL)$ 
and $Z^{\downarrow}_{+\infty}= Z^{\downarrow}_{+\infty}(\LL)$
denote the number (counting multiplicity) of eigenvalue curves $\mu = \mu(\la)$ of $\LL$ with $\mu(\la) < 0$ for $\la \in (-\infty, -K)$ and $\la \in (K, \infty)$, respectively, for a sufficiently large $K > 0$.
\end{Definitiona}
The theory applies to real analytic 
Hermitian matrix pencils $\LL(\la)$, i.e., real analytic $\LL: \RR \rightarrow \CC^{n\times n}$ for which $\LL(\la)$ is Hermitian for each $\la \in \RR$, and thus generalizes the typical case of polynomial Hermitian matrix pencils.
\begin{Theorema}
Let $\LL(\la)$ be a real analytic Hermitian matrix pencil. Then 
\begin{eqnarray}
\left( Z^{\downarrow}_{-\infty} + Z^{\downarrow}_{+\infty}  \right) - 2n(\LL(0)) 
- \left( Z^{\downarrow}_{0^+} + Z^{\downarrow}_{0^-}\right)   & = & - \sum_{\substack{\la \neq 0 \\ \la \in \sigma(\LL)}}
\sign(\la) \kappa(\la) 
\label{indexeq1}\\
\left( Z^{\downarrow}_{-\infty} - Z^{\downarrow}_{+\infty}  \right)
+ \left( Z^{\downarrow}_{0^+} -  Z^{\downarrow}_{0^-} \right)   & = &  \sum_{\substack{\la \neq 0 \\ \la \in \sigma(\LL)}} \kappa(\la)\, .
\label{indexeq2}
\end{eqnarray}
\label{thm:index}
\end{Theorema}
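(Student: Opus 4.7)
The plan is to reduce \eq{indexeq1}--\eq{indexeq2} to a purely local sign-of-eigenvalue accounting at each real characteristic value and then to telescope. Fix a real characteristic value $\la_0$ of $\LL$ and a real analytic eigenvalue branch $\mu=\mu(\la)$ of $\LL(\la)$ with $\mu(\la_0)=0$, and let $m$ and $\eta(\mu)=\sign\,\mu^{(m)}(\la_0)$ be as in Definition~\ref{d:GKsig}. A direct four-case analysis on the parity of $m$ and the sign of $\eta(\mu)$ yields the per-branch identity
\begin{equation*}
Z^{\downarrow}_{\la_0^+}(\mu)-Z^{\downarrow}_{\la_0^-}(\mu)=-\kappa_\mathrm{g}(\mu,\la_0),
\end{equation*}
where $Z^{\downarrow}_{\la_0^\pm}(\mu)\in\{0,1\}$ denotes the contribution of the single branch $\mu$ to $Z^{\downarrow}_{\la_0^\pm}$. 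When $m$ is even, $\mu$ keeps one sign on both sides of $\la_0$, so both sides of the displayed equation vanish; when $m$ is odd, $\mu$ crosses $0$ and the jump is $\pm 1$ with sign opposite to $\eta(\mu)$, matching $-\kappa_\mathrm{g}(\mu,\la_0)=\mp 1$. Summing over the finitely many branches vanishing at $\la_0$ and invoking the equivalence \eq{kreinequiv} gives the local jump formula
\begin{equation*}
Z^{\downarrow}_{\la_0^+}-Z^{\downarrow}_{\la_0^-}=-\kappa(\la_0).
\end{equation*}

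I then introduce the counting function $M(\la):=n(\LL(\la))$, the number of strictly negative eigenvalues of $\LL(\la)$. The real-analytic dependence of the eigenvalues on $\la$ makes $M$ piecewise constant, with jumps supported precisely on the discrete set of real characteristic values of $\LL$. Splitting the eigenvalue branches at $\la_0$ into those with $\mu_i(\la_0)\neq 0$ (which keep their sign locally) and those with $\mu_i(\la_0)=0$ (which contribute exactly $Z^{\downarrow}_{\la_0^\pm}$ on each side) gives the elementary bookkeeping
\begin{equation*}
M(\la_0^\pm)=n(\LL(\la_0))+Z^{\downarrow}_{\la_0^\pm},
\end{equation*}
while $M(\pm\infty)=Z^{\downarrow}_{\pm\infty}$ by definition.

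The last step is telescoping. Summing the jumps of $M$ separately over the real characteristic values in $(-\infty,0)$ and in $(0,\infty)$ and applying the local jump formula yields the two half-line identities
\begin{equation*}
M(0^-)-Z^{\downarrow}_{-\infty}=-\!\!\sum_{\la_0<0}\kappa(\la_0),\qquad Z^{\downarrow}_{+\infty}-M(0^+)=-\!\!\sum_{\la_0>0}\kappa(\la_0).
\end{equation*}
Adding these two identities, and simplifying via $M(0^+)-M(0^-)=Z^{\downarrow}_{0^+}-Z^{\downarrow}_{0^-}$, produces \eq{indexeq2}; subtracting them and using $M(0^+)+M(0^-)=2\,n(\LL(0))+Z^{\downarrow}_{0^+}+Z^{\downarrow}_{0^-}$ produces \eq{indexeq1}.

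The main obstacle is the per-branch case analysis in the first step combined with the need for a globally consistent real analytic labelling of the eigenvalue branches of $\LL(\la)$. Pointwise the sign bookkeeping is elementary, but to add the per-branch identities unambiguously one must organise the eigenvalues of $\LL(\la)$ into globally real analytic curves; for real analytic Hermitian matrix pencils this is guaranteed by the Rellich/Kato theorem, although crossings of distinct branches away from $\la_0$ require some care to ensure that each branch contributes consistently to all four quantities $Z^{\downarrow}_{\pm\infty}$, $Z^{\downarrow}_{0^\pm}$ and $\kappa(\la_0)$. Once such a labelling is fixed, the remainder of the argument reduces to an entirely elementary piecewise-constant telescoping.
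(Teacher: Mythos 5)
Your proof is correct and is essentially the paper's own argument: your two half-line identities are exactly the paper's quadrant counts \eq{Q1sum} and \eq{Q2sum} (after substituting $M(0^\pm)=n(\LL(0))+Z^{\downarrow}_{0^\pm}$), and the final sum-and-difference step is identical. The only difference is presentational --- you make the ``count of curves entering and leaving $Q_\pm$'' rigorous via the per-branch jump formula (which the paper records separately as \eq{kernel0}) and the piecewise-constant counting function $M(\la)$, which is a welcome expansion of the paper's two-line sketch rather than a different route.
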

\begin{figure}[t]
\includegraphics[scale=0.45]{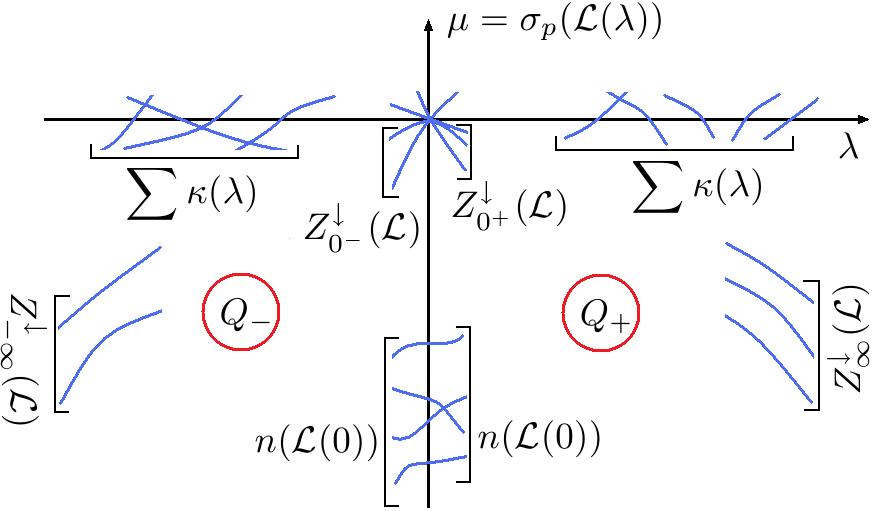}
\caption{\small{%
A schematic plot of the proof of Theorem~\ref{thm:index}. The point spectrum $\sigma_p(\LL(\la))$ is organized in eigenvalue branches $\mu = \mu(\la)$.
\label{fig2}}}
\end{figure}
\begin{proof}
Consider the (parameter dependent) eigenvalue problem \eq{Lmu}. According to the perturbation theory \cite{Kato} the eigenvalue and eigenvector curves $\mu(\la)$ 
and $u(\la)$ are analytic in $\la$. Furthermore, let $Q_\pm$ be the quadrants in the $(\la, \mu)$-plane (see Fig.~\ref{fig2}).
A simple count of curves entering and leaving $Q_\pm$ yields the counts
\begin{eqnarray}
Z^{\downarrow}_{-\infty}(\LL) - n(\LL(0)) - Z^{\downarrow}_{0^-}(\LL) - \sum_{\la < 0, \la \in \sigma(\LL)} \kappa(\la) & = & 0\, , 
\label{Q1sum}\\
Z^{\downarrow}_{+\infty}(\LL) - n(\LL(0)) - Z^{\downarrow}_{0^+}(\LL)  + \sum_{\la > 0, \la \in \sigma(\LL)} \kappa(\la)  & = & 0\, , 
\label{Q2sum}
\end{eqnarray}
Then the sum and the difference of \eq{Q1sum} and \eq{Q2sum} give \eq{indexeq1} and \eq{indexeq2}. 
\end{proof}

A local version of the index theorem can be proved analogously.
\begin{Theorema}
Let $\LL(\la)$ be a real analytic Hermitian matrix pencil. 
Then the following local index theorem holds for any real $\la_1, \la_2$ with $\la_1 < \la_2$: 
\begin{equation}
n\big(\LL(\la_1)\big) - n\big(\LL(\la_2)\big) + Z^{\downarrow}_{\la_1^+} - Z^{\downarrow}_{\la_2^-} 
= \sum_{\substack{\la_1 < \la < \la_2 \\ \la \in \sigma(\LL)}} \kappa(\la)\, .
\label{indexeq4}
\end{equation}
\label{thm:local}
\end{Theorema}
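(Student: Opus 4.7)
The plan is to adapt the bookkeeping argument used for Theorem~\ref{thm:index} to a bounded $\la$-window instead of the full line. By analytic perturbation theory \cite{Kato}, real analytic eigenvalue branches $\mu_j(\la)$ of $\LL(\la)$ exist, and since $\LL$ is a real analytic matrix pencil, characteristic values are isolated; in particular only finitely many branches vanish inside the compact interval $[\la_1,\la_2]$.

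The first step is to establish the boundary identities
\begin{equation*}
n\big(\LL(\la_1+\eps)\big) = n\big(\LL(\la_1)\big) + Z^{\downarrow}_{\la_1^+},
\qquad
n\big(\LL(\la_2-\eps)\big) = n\big(\LL(\la_2)\big) + Z^{\downarrow}_{\la_2^-},
\end{equation*}
valid for all sufficiently small $\eps>0$. Indeed, the branches that contribute to $n(\LL(\la_1+\eps))$ are precisely those with $\mu_j(\la_1)<0$ (counted by $n(\LL(\la_1))$) together with those with $\mu_j(\la_1)=0$ that descend into $\mu<0$ just to the right (counted by $Z^{\downarrow}_{\la_1^+}$); the remaining branches with $\mu_j(\la_1)=0$ either stay at $0$ or ascend and contribute nothing. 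The identity at $\la_2$ is symmetric.

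The second step is to compute the net change of $n(\LL(\la))$ across $(\la_1+\eps,\la_2-\eps)$. Between consecutive characteristic values this integer is constant, so it can only jump at $\la_0\in\sigma(\LL)\cap(\la_1,\la_2)$. A case analysis on the order of vanishing $m$ and the sign $\eta(\mu) = \sign\mu^{(m)}(\la_0)$ of each branch $\mu=\mu(\la)$ crossing $(\la_0,0)$ shows that the local change of $n(\LL(\cdot))$ contributed by that branch equals $-\kappa_\mathrm{g}(\mu,\la_0)$: even-$m$ branches merely touch $\mu=0$ and produce no net change (consistent with $\kappa_\mathrm{g}(\mu,\la_0)=0$), while odd-$m$ transversal crossings change the negative-eigenvalue count by $-\eta=-\kappa_\mathrm{g}(\mu,\la_0)$. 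Summing over branches at $\la_0$ and over all interior characteristic values, and invoking \eq{kreinequiv}, gives
\begin{equation*}
n\big(\LL(\la_2-\eps)\big) - n\big(\LL(\la_1+\eps)\big) = -\!\!\sum_{\substack{\la_1<\la<\la_2 \\ \la\in\sigma(\LL)}}\kappa(\la).
\end{equation*}
Substituting the two boundary identities and rearranging yields \eq{indexeq4}.

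The main obstacle is the careful boundary bookkeeping at $\la_1$ and $\la_2$: one must justify that the ``$+$'' in $Z^{\downarrow}_{\la_1^+}$ and the ``$-$'' in $Z^{\downarrow}_{\la_2^-}$ correctly capture the directed flux of eigenvalue curves into and out of the window, especially when $\la_1$ or $\la_2$ itself is a characteristic value. The simplest way to see this is to visualize the argument as a conservation law for eigenvalue curves through the boundary of the rectangle $(\la_1,\la_2)\times(-\infty,0)$ in the $(\la,\mu)$-plane, exactly analogous to Fig.~\ref{fig2} but with vertical walls at $\la_1$ and $\la_2$ in place of those at $0$ and $\pm\infty$.
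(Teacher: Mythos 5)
Your proposal is correct and follows essentially the same route the paper intends: the paper proves Theorem~\ref{thm:local} by the same count of eigenvalue curves entering and leaving a region of the $(\la,\mu)$-plane used for Theorem~\ref{thm:index}, only with the quadrants $Q_\pm$ replaced by the strip $\la_1<\la<\la_2$, which is precisely your conservation-law picture with vertical walls at $\la_1$ and $\la_2$. Your explicit boundary identities and the case analysis showing that each branch changes $n(\LL(\cdot))$ by $-\kappa_\mathrm{g}(\mu,\la_0)$ are a correct, more detailed write-up of that same bookkeeping.
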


It is easy to see that 
\begin{equation}
Z^{\downarrow}_{\la_0^-} - Z^{\downarrow}_{\la_0^+} =  Z_{\la_0^+}^{\uparrow} - Z_{\la_0^-}^{\uparrow} = \kappa(\la_0)\, ,
\label{kernel0}
\end{equation}
since the eigenvalue branches vanishing at $\la_0$ at even order do not contribute  to the right hand side of \eq{kernel0}.
Then
\begin{equation}
Z^{\downarrow}_{\la_0^-} + Z^{\downarrow}_{\la_0^+} = \kappa(\la_0) + 2 Z^{\downarrow}_{\la_0^+} \, ,\qquad 
Z^{\downarrow}_{\la_0^-} + Z^{\uparrow}_{\la_0^-} =  Z^{\downarrow}_{\la_0^+} + Z^{\uparrow}_{\la_0^+}= z(\LL(\la_0))\, .
\label{kernel1}
\end{equation}

Moreover, if the matrix pencil $\LL(\la)$ has an extra structure then the terms $Z^{\downarrow}_{\pm \infty} (\LL)$ 
can be determined by the perturbation theory. Let 
\begin{equation}
\LL (\la) = L(\la) - g(\la)\II \, ,
\qquad
L(\la) = \sum_{k=0}^{p} \la^k L_k\, , \qquad
g(\la) = \sum_{k=0}^{q} \la^k g_k.
\label{pencil1}
\end{equation}
Here $L_0, \dots, L_p$ are complex Hermitian matrices $n\times n$ and $g_0, \dots g_q$ are real constants. 
There is a freedom of choice in an inclusion of identity multipliers in $g(\la)$ and $L(\la)$ but the index theorems only depend on the leading order term of $\LL(\la)$ and only differences of $g(\la)$ 
and $L(\la)$ are relevant. Therefore we ignore such an ambiguity in \eq{pencil1} and assume
$\la^p L_p \neq \la^q g_q\II$. Since $\sigma(L(\la)) \approx \la^p \sigma(L_p)$ and $g(\la) \approx \la^q g_q$ for $|\la| \rightarrow \infty$
the values of  terms $Z^{\downarrow}_{\pm \infty}$ in \eq{indexeq1} and \eq{indexeq2} 
are determined by the leading order coefficients of $g(\la)$ and $L(\la)$.

\begin{Theorema}
Let $\LL(\la)$ be a real analytic Hermitian matrix pencil of the form \eq{pencil1} and let $L_p$ be invertible. 
Then the values of the counts 
$Z^{\downarrow}_{-\infty}(\LL)$ and $Z^{\downarrow}_{+\infty}(\LL)$ 
appearing in Theorem~\ref{thm:index}
are given by the values in the table below (depending on the properties of $L(\la)$ and $g(\la)$):
\begin{center}
\begin{tabular}[h]{c c|c c}
$p, q$ & $g_q$ & $Z^{\downarrow}_{-\infty}(\LL)$ & $Z^{\downarrow}_{+\infty}(\LL)$ \\ \hline
$\mbox{$q > p$, $q$ even}$ & $g_q >0$ & $n$ & $n$ \\ \hline
$\mbox{$q > p$, $q$ even}$ & $g_q <0$ & $0$ & $0$ \\ \hline
$\mbox{$q > p$, $q$ odd}$ & $g_q >0$ & $0$ & $n$ \\ \hline
$\mbox{$q > p$, $q$ odd}$ & $g_q <0$ & $n$ & $0$ \\ \hline
$q < p$ &  & $n\big((-1)^pL_p\big)$ & $n(L_p)$ \\ \hline
$q = p$ &  & $n\big((-1)^p(L_p- q_p\II)\big)$ & $n(L_p-q_p\II)$ \\
\end{tabular}
\end{center}
\label{th:table}
\end{Theorema}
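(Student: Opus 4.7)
The plan is to reduce the result to an asymptotic analysis of the $n$ real-analytic eigenvalue branches of $\LL(\la)$ at $\pm\infty$. Since $g(\la)\II$ is a scalar multiple of the identity, the analytic branches of $\LL(\la)=L(\la)-g(\la)\II$ are precisely the shifts $\mu_i(\la)-g(\la)$, where $\mu_i(\la)$ are the real-analytic eigenvalue branches of $L(\la)$ (existence via Rellich's theorem for real-analytic Hermitian pencils). Thus $Z^{\downarrow}_{\pm\infty}(\LL)$ is the number of indices $i$ for which $\mu_i(\la)-g(\la)<0$ for $\la$ of large absolute value and the prescribed sign, and the problem becomes one of comparing the leading-order asymptotics of $\mu_i(\la)$ and $g(\la)$.

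First I would observe that when $L_p$ is invertible, $\la^{-p}L(\la)\to L_p$ as $|\la|\to\infty$, and each analytic branch satisfies $\mu_i(\la)=\la^p\alpha_i+O(|\la|^{p-1})$ for some nonzero eigenvalue $\alpha_i$ of $L_p$. Consequently, for $|\la|$ large, $\sign\mu_i(\la)=\sign(\alpha_i)$ as $\la\to+\infty$, and $\sign\mu_i(\la)=(-1)^p\sign(\alpha_i)$ as $\la\to-\infty$. Similarly $g(\la)\sim g_q\la^q$; hence whichever of $\mu_i(\la)$ and $g(\la)$ has larger growth dictates the sign of $\mu_i(\la)-g(\la)$ at infinity.

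Then I would split into three regimes. For $q>p$, the shift $-g(\la)\sim -g_q\la^q$ dominates uniformly in $i$, so every branch of $\LL(\la)$ has the common sign $-\sign(g_q\la^q)$ for $|\la|$ large; evaluating this at $\pm\infty$ according to the parity of $q$ and the sign of $g_q$ yields the four $q>p$ rows of the table (all $n$ or all $0$). For $q<p$, the branch $\mu_i$ dominates $g(\la)$, so $\sign(\mu_i(\la)-g(\la))$ coincides with $\sign\mu_i(\la)$, giving $n(L_p)$ negative branches at $+\infty$ and $n((-1)^pL_p)$ at $-\infty$. For $q=p$, the leading-order expansion $\LL(\la)=\la^p(L_p-g_p\II)+O(|\la|^{p-1})$ makes the $q<p$ argument apply verbatim with $L_p$ replaced by $L_p-g_p\II$, yielding $n(L_p-g_p\II)$ and $n((-1)^p(L_p-g_p\II))$.

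The main obstacle I anticipate is the degenerate subcase $q=p$ with $L_p-g_p\II$ singular: the hypothesis $\la^pL_p\ne\la^qg_q\II$ rules out $L_p=g_p\II$ but does not preclude a nontrivial kernel of $L_p-g_p\II$, in which case some branches grow slower than $|\la|^p$ and the leading-order analysis is insufficient to pin down their sign at infinity. A fully rigorous argument requires either tacit invertibility of $L_p-g_p\II$ (implicitly encoded by the formula $n(L_p-g_p\II)$) or a Puiseux-type expansion in $\la^{-1/k}$ that exploits the real-analyticity of each branch---each branch is ultimately of a definite sign, so the sign is determined by the first nonvanishing coefficient in its asymptotic series. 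Apart from this issue, the proof is a direct dominant-term computation.
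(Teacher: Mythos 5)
Your proposal is correct and takes essentially the same route as the paper, which justifies the table only by the preceding remark that $\sigma(L(\la)) \approx \la^p\sigma(L_p)$ and $g(\la)\approx \la^q g_q$ as $|\la|\to\infty$, so that the signs of the eigenvalue branches of $\LL(\la)=L(\la)-g(\la)\II$ at $\pm\infty$ are dictated by whichever leading term dominates; your three-case analysis is a correct elaboration of exactly that argument. The degeneracy you flag in the $q=p$ row when $L_p-g_p\II$ is singular is a genuine gap in the statement's hypotheses, but the paper does not address it either.
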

By setting $g(\la) = 0$, i.e. $q =0$, and thus $q < p$  for a non-trivial pencil $\LL(\la)$, 
in Theorems~\ref{thm:index} and \ref{th:table} one can recover generalizations of Theorems~\ref{th:KKS} 
and \ref{th:J} in a finite dimensional case \cite{KM}.
Next we illustrate how specific counts for quadratic eigenvalue pencils can be derived from
Theorems~\ref{thm:index} and \ref{th:table}.

\begin{example}\label{ex:0}
Consider the quadratic Hermitian matrix pencil  $\LL(\la)$, $\LL: \RR \rightarrow \CC^{n \times n}$
\begin{equation}
\LL(\la) = M + \la K + \la^2 \II\, , \qquad M^{\ast} = M, \quad K^{\ast} = K\, .
\end{equation}
Then, $L(\la) = M + \la K$, $p = 1$, and $g(\la) = -\la^2$, $q = 2$, and $g_q = -1$ in \eq{pencil1}. 
Therefore 
$Z^{\downarrow}_{-\infty} = Z^{\downarrow}_{+\infty} = 0$ by Theorem~\ref{th:table}. Theorem~\ref{thm:index} then gives
\begin{equation*}
2 n(M) + (Z^{\downarrow}_{0^+} + Z^{\downarrow}_{0^-})  =  
\sum_{\la \in \sigma(\LL)} \sign (\la) \kappa(\la)\, , \quad
Z^{\downarrow}_{0^+} -  Z^{\downarrow}_{0^-}  =   \sum_{\substack{\la \neq 0 \\ \la \in \sigma(\LL)}} \kappa(\la)\, .
\end{equation*}
The symmetry $(\la, \overline{\la})$ of spectrum of $\LL$ implies
\begin{equation}
2n = z(\LL) + n_r(\LL) + 2n_c(\LL) + n_i (\LL)\, ,
\label{nsum}
\end{equation}
where $n_r(\LL)$, $n_i(\LL)$ and $n_r(\LL)$ are, respectively, the numbers of real, purely imaginary, and non-real non-purely-imaginary characteristic values of $\LL(\la)$, $n_i(\LL)$ is even. Also
\begin{eqnarray*}
n_r(\LL) & = &  \sum_{\la \in \sigma(\LL), \la > 0} \left[\kappa^+(\la) + \kappa^-(\la)\right]
+  \sum_{\la \in \sigma(\LL), \la < 0} \left[\kappa^+(\la) + \kappa^-(\la) \right]\nonumber \\
& = & 
 \sum_{\la \in \sigma(\LL)} \sign(\la) \kappa(\la) 
+  2 \sum_{\la \in \sigma(\LL), \la > 0} \kappa^-(\la) + 
2 \sum_{\la \in \sigma(\LL), \la < 0}  \kappa^+(\la)\, .\ \ \ 
 \label{LM3} 
\end{eqnarray*}
Finally, we denote
\begin{equation}
n_r^-(\LL) := 2\sum_{\la \in \sigma(\LL), \la > 0} \kappa^-(\la)\, , \qquad
n_r^+(\LL) := 2\sum_{\la \in \sigma(\LL), \la > 0} \kappa^+(\la)\, , \qquad
\label{def:nrm}
\end{equation}
motivated by the case of simple real characteristic values of $\LL$ where $n_r^-$, respectively $n_r^+$, counts the number of
characteristic values of $\LL$ with the negative, respectively positive, Krein signature. 
Then \eq{nsum} can be rewritten as
\begin{equation}
2 n(M) + (Z^{\downarrow}_{0^+} + Z^{\downarrow}_{0^-})  = n_r(\LL) - n_r^-(\LL) -n_r^+(\LL) \, .
\label{LM5}
\end{equation}

{\it The symmetric case.}
If the eigenvalue problem $\LL(\la)u = \mu u$ has an additional symmetry $(\la, \mu) \rightarrow (-\la, \mu)$
the index theorem can be further simplified. 
Clearly, $ Z^{\downarrow}_{0^+} =  Z^{\downarrow}_{0^-}$, $ \kappa^-(\la) = \kappa^+(\la)$, i.e., $n_r^-(\LL) = n_r^+(\LL)$, 
and $n_r^+ (\LL) + n_r^-(\LL) = {n_r(\LL)}$.
Furthermore, all the numbers $z(\LL), n_r(\LL), n_c(\LL)$ are even.
A difference of \eq{nsum} and \eq{LM5}  
yields
\begin{equation}
\left[n - \frac{z(\LL)}{2}\right] - \left[n(M) + Z^{\downarrow}_{0^+} \right] = n_c(\LL) + \frac{n_i(\LL)}{2} 
+ n_r^-(\LL)
\label{indexLM2}
\end{equation}
The equation complementary to \eq{indexLM2} with respect to \eq{nsum} is
\begin{equation}
\left[n - \frac{z(\LL)}{2}\right] + \left[n(M) + Z^{\downarrow}_{0^+}\right]  = n_c(\LL) + \frac{n_i(\LL)}{2} 
+ n_r^+(\LL)
\label{indexLM3}
\end{equation}

In the special case of a real Hermitian $M$, a purely imaginary Hermitian $L$, and under the assumption $\Ker M \subset \Ker L$ 
the counts  \eq{indexLM2}--\eq{indexLM3} correspond to index theorems derived in \cite{ChagPel} and \cite{KollarH}.
\end{example}

\subsection{Algebraic Calculation of $Z^{\downarrow}$ and $Z^{\uparrow}$}
The counts \eq{indexeq1}, \eq{indexeq2}, and \eq{indexeq4} contain  terms $Z^{\downarrow}$, $Z^{\uparrow}$
that have a simple graphical interpretation. However, it is more traditional to express them in an algebraic form that we derive 
in this section. Theorem~\ref{Th:kernel} generalizes the relation between $Z^{\downarrow}$ and $n(D)$
in Theorem~\ref{th:KKS} (see \cite{KM} for details) that holds in the case of all Jordan blocks of the eigenvalue 0 of $J\!L$ of length two (implying 
$\dim \gKer (J\!L) = 2 \dim \Ker (L)$) to the case of the generalized kernel of $\LL(\la_0)$ of an arbitrary structure.
First, we formulate the general assumption that guarantees the required smoothness of the eigenvalue and eigenvector branches at the characteristic value of an operator pencil. 

\begin{Assumptiona}
Let $\LL= \LL(\la)$ be a real analytic self-adjoint operator pencil acting on a Hilbert space $X$ 
and let $\la_0$ be its characteristic value of a finite multiplicity. 
Let $\UU_0 = \Ker (\LL(\la_0))$ with $\dim \UU_0 = k$, and let $\eps >0$, $\delta >0$ are fixed.
Assume that for all $\la \in (\la_0-\eps, \la_0 +\eps)$
the part of the spectrum $\sigma(\LL(\la)) \cap (-\delta, \delta)$ of $\LL(\la)$ 
consists of eigenvalues organized in $C^{\infty}$ eigenvalue branches $\mu_j(\la)$, $\mu_j(\la_0) = 0$, 
$\mu_j: (\la_0-\eps, \la_0+\eps)\rightarrow  (-\delta, \delta)$, $j=1, \dots, k$, 
and that the associated eigenvector branches $u_j(\la)$, $u_j: (\la_0-\eps, \la_0+\eps) \rightarrow X$, $1\le j \le k$, are also $C^{\infty}$. 
\label{a:branches}
\end{Assumptiona}
Note that Assumption \ref{a:branches} is satisfied for Hermitian matrix pencils for any $\eps > 0$ and $\delta >0$ \cite{Kato}. 
Define for every $m\ge 0$ the sets
\begin{eqnarray}
K_m^+ & := & \{ \mu_i(\la);\  1 \le i \le k, \mu_i^{(s)}(\la_0) = 0, \ 0 \le s \le m-1 , \ \mu^{(m)}(\la_0) > 0\}, \nonumber \\
K_m^- & := & \{ \mu_i(\la);\  1 \le i \le k, \mu_i^{(s)}(\la_0) = 0, \ 0 \le s  \le m-1, \ \mu^{(m)}(\la_0) < 0\}, \nonumber \\
K_m^0 & := & \{ \mu_i(\la); \ 1 \le i \le k, \mu_i^{(s)}(\la_0) = 0, \ 0 \le s \le m\}, 
\label{setsK}
\end{eqnarray}
The sets $K_m^+$, $K_m^-$ and $K_m^0$ are disjoint for any $m \ge 0$ and
\begin{equation}
K_m^0 = K_{m+1}^- \cup K_{m+1}^+ \cup K_{m+1}^0, \qquad m\ge 0\, .
\end{equation}
For a characteristic value $\la_0$ of $\LL(\la)$ of a finite multiplicity 
$K_m^0 =\emptyset$ for $m$ large enough.
Then 
\begin{equation*}
Z^{\downarrow}_{\la_0^+}  =  \big| \bigcup_{m=1}^{\infty} K_m^- \big| = \sum_{m=1}^{\infty} |K_m^-|\, , \ 
Z^{\downarrow}_{\la_0^-}  =  \big| \bigcup_{m=1}^{\infty} K_{2m-1}^+ \cup K_{2m}^- \big| = 
\sum_{m=1}^{\infty} |K_{2m-1}^+| + |K_{2m}^-|\, .
\end{equation*}
Also, observe that $n(\LL(\la_0)) = |K_0^-|$ and that the algebraic multiplicity of the characteristic value $\la_0$ of $\LL$ is given by 
$\sum_{m=1}^{\infty} m \left( |K_m^+| + |K_m^-| \right)$.

We claim that $|K_m^\pm|$, $m \ge 0$, can be calculated as the number of positive (negative) eigenvalues of a specific matrix defined in Theorem~\ref{Th:kernel}. 
Particularly for $m$ odd, $|K_m^\pm|$ counts the number of maximal chains of root vectors of $\LL$ at $\la_0$ with positive (negative) Krein index. 
Therefore the Krein index $\kappa(\UU, \la_0)$ can be calculated by two different ways, either from the (algebraic) definition or by using the graphical Krein signature. 

\begin{example}\label{ex:1A}
Let $\LL(\la) = M + \la L + \la^2\II$ be a quadratic Hermitian matrix pencil and let 
$\UU = \{u^{[0]}, u^{[1]}, u^{[2]}\}$ be  a maximal chain of root vectors of  $\LL(\la)$ at a characteristic value $\la_0 = 0$. According to the definition of the Krein indices analogous to Definition~\ref{d:Ksig} (see \cite{KM} for details) the indices $\kappa^{\pm}(\UU,\la_0)$ count the number of positive and negative eigenvalues of the Gram matrix $W$
$$
W_{ij} = (u^{[i-1]}, Lu^{[j-1]}) +  (u^{[i-2]}, u^{[j-1]}) + (u^{[i-1]}, u^{[j-2]})\, , \quad 
i,j = 1,2,3,
$$ 
where we formally set  $u^{[-1]} = 0$. 
The characteristic polynomial $f(\la) = \det (W - \la \II)$ is a cubic polynomial with negative leading order coefficient and 
three real roots, either two of them positive and one negative or one of them positive and two negative. 
Thus 
\begin{equation}
\kappa(\UU, 0) = -\sign f(0)  = -\sign (\det W)\, .
\label{sig1}
\end{equation}
\end{example}

\begin{example}\label{ex:1B}
Consider the quadratic pencil in Example~\ref{ex:1A} and its characteristic value $\la_0=0$ with a maximal chain of root vectors
$\UU = \{u^{[0]}, u^{[1]}, u^{[2]}\}$. According to the theory \cite{KM} (see \cite{GLR} for the matrix case) there exist eigenvalue and eigenvector  branches $\mu(\la)$,  $u(\la)$ of \eq{Lmu} such that $\mu(0) = \mu'(0) = \mu''(0) = 0$ and $u(0) = u^{[0]}$. Also 
$\kappa(\UU,0) = \sign \mu'''(0) \neq 0$. For notational ease we denote 
$\LL = \LL(0)$, $u = u(0)$, $\mu = \mu(0)$, with the analogous notation for the derivatives: $\mu' = \mu'(0)$, $u' = u'(0)$, etc. We normalize 
 $(u(0), u(0)) = 1$, differentiate \eq{Lmu} three times, and take the scalar product with $u$ to obtain 
\begin{equation}
\left(u, \frac{\LL'''}{3!} u\right) + \left(u, \frac{\LL''}{2!} u'\right) + \left(u, \LL' \frac{u''}{2!}\right) + \left(u, \LL\frac{u'''}{3!}\right)= \frac{\mu'''}{3!}\, .
\label{pc2}
\end{equation}
Since $\LL$ is Hermitian the last term on the left hand side of \eq{pc2} vanishes. 
Also, differentiation of \eq{Lmu} implies  $\LL u' + \LL' u= 0$ and $\LL u'' + 2\LL' u'+ \LL'' u = 0$. 
The operator $\LL =\LL(0)$ is not invertible but $\LL + \Pi$, 
where $\Pi$ is the orthogonal projection $X \rightarrow \Ker \LL(0)$, is. 
We denote $\Lin =-( \LL + \Pi)^{-1}$ and note that $\Lin \Pi = -\Pi$. Then 
\begin{equation}
u' = \Lin \LL' u + \Pi u'\, , \quad \mbox{and} \quad
\frac{u''}{2!} =  \Lin  L' u'  + \Lin \frac{\LL''}{2!}u + \Pi \frac{u''}{2!}\, .
\label{pc23}
\end{equation}
Simple algebra (see Theorem~\ref{Th:kernel} for the derivation in the general case) reduces \eq{pc2}  to
\begin{equation}
\kappa(\UU,\la_0) = \sign \mu'''(0) = \sign \big[ 
(u, \La_3 u)  - ( \Pi u', L \Pi u') \big]\, ,
\label{pd10}
\end{equation}
where $\La_3$ is defined in \eq{Lamdef}. 
In the case of the quadratic pencil  
$$\La_3 = (M+\Pi)^{-1} L + L(M+\Pi)^{-1} + L(M+\Pi)^{-1}L(M+ \Pi)^{-1}L\, .$$
\end{example}

{\it Note.} 
The formula \eq{pd10}  has important consequences. It contains only $u = u^{[0]}$ and $u' = u'(0)$, i.e.,  it does not require  knowledge of the whole maximal chain~$\UU$, contrary to \eq{sig1}. Also, the term $( \Pi u', L \Pi u')$ is, generally, non-vanishing and 
since  $\Pi u' \in \Ker \LL$, its value is not directly encoded in a chain $\UU$ 
as  the generalized eigenvectors are determined uniquely only up to a multiple  of  $u^{[0]}$. It means that $u'(0)$ is not just an arbitrary generalized root vector to $u^{[0]}$ but it
captures an extra information $\Pi u'$ that is not, in general, contained in $u^{[1]}$. Thus the chain of root vectors $(u(0), u'(0), u''(0)/2)$ is exceptional that will be also confirmed in a general case of an arbitrary multiplicity. 
A similar calculation in the case of a quadratic matrix pencil can be found in \cite{Bosak}.

As it was illustrated in the Example~\ref{ex:1B} the graphical approach requires a proper definition of the inverse of  the operator $\LL(\la_0)$. If $\LL(\la_0)$ is Fredholm and self-adjoint then $\Ker \LL(\la_0) \perp \Ran \LL(\la_0)$. The operator $\LL(\la_0)$ 
acts on the Hilbert space $X = \Ker \LL(\la_0) \oplus \Ran \LL(\la_0)$. 
If $(v_1, v_2) \in \Ker \LL(\la_0) \oplus \Ran \LL(\la_0)$ 
then $\LL(\la_0) (v_1, v_2) = (0, \LL(\la_0) v_2)$. 
The operator $\LL(\la_0)$ is 1-to-1 on $\Ran \LL(\la_0)$ and thus the operator $\LL(\la_0) + \Pi$ is invertible as
$(\LL(\la_0) + \Pi) (v_1, v_2) = (v_1, \LL(\la_0) v_2)$. 

\begin{Definitiona}
Let $\LL = \LL(\la)$ be an operator pencil acting on a Hilbert space $X$ with a characteristic value $\la_0$ of a finite multiplicity, and let $\LL(\la_0)$ have the Fredholm index zero. 
Let $\Pi$ be an orthogonal projection $X \rightarrow \Ker \LL(\la_0)$. Then we define 
\begin{equation}
\Lin := -(\LL(\la_0)+\Pi)^{-1} \, .
\end{equation}
\end{Definitiona}
Clearly $\LL \Pi = 0$  and $\Lin \Pi = -\Pi$. Also, denote $D := d/d\la$.

{\it Notation.} 
We introduce the following notation. Let $V$ be a linear subspace of $X$ of the dimension $k$ with the orthonormal basis 
$\{v_1, \dots, v_k\}$ and let $S$ be a self-adjoint operator acting on $S$. 
First, let $V^{\ast}SV$ denote the matrix of the quadratic form 
$(\cdot, S\, \cdot)$ acting on $V$, i.e., the matrix $(v_i, S v_j)$, $1 \le i, j \le k$.
Then $\hKer(V^{\ast} S V)$ denotes 
the $k\times m$ matrix with its columns given by the $m$ column vectors that form the basis of the kernel of $V^{\ast}SV$.  
Finally, let $W := \mbox{span\,} \left[ V \hKer(V^{\ast} S V) \right]$ defines the subspace of $X$ of the dimension $m$ spanned by 
vectors obtained by the multiplication of the row vector $(v_1, \dots, v_k) \in X^{k}$ by the $m$ columns of the matrix 
$\hKer(V^{\ast} S V)$.

\begin{Theorema}
Let $\LL = \LL(\la)$ be an operator pencil on a Hilbert space $X$ with a characteristic value $\la_0$ satisfying Assumption~\ref{a:branches}, 
and let $\LL(\la_0)$ be of Fredholm index zero. Let  $K_m^\pm$, $m \ge 0$, be defined 
in \eq{setsK} and  let $U_0 = \Ker \LL(\la_0) \subset X$. 
We define recursively
\begin{equation}
U_{m+1}: = U_{m}\,  \hKer  \left( {U}_m^{\ast} H_{m+1} \, {U}_m\right) \, ,
\label{Udef}
\end{equation}
The operator $H_m$, $m \ge 1$  is defined as $H_m : = \La_m +  \De_m$.
Here 
\begin{eqnarray}
\La_m & := & \sum_{|\alpha| = m}
\frac{\LL^{(\alpha_1)}(\la_0)}{\alpha_1!} \, \Lin  \, 
\frac{\LL^{(\alpha_2)}(\la_0)}{\alpha_2!}\,  \Lin\,  \cdot  \dots \cdot\, 
\Lin \, \frac{\LL^{(\alpha_s)}(\la_0)}{\alpha_s!},\label{Lamdef} \\
\De_m & := & \sum_{|\alpha| = m} D^{\alpha_1}\Pi \, \La_{\alpha_2} \, \Pi D^{\alpha_3}
\, ,
\label{Lan}
\end{eqnarray}
where the multi-index $\alpha = (\alpha_1, \dots, \alpha_s)$ has positive integer entries and its norm is calculated as
$|\alpha| = \sum_{i=1}^s \alpha_i$. 
Then for $m \ge 1$ 
\begin{equation*}
|K_m^+| = p\, ( U_{m-1}^{\ast} \,H_m\, U_{m-1} )\, ,
\ 
|K_m^-| = n\, ( U_{m-1}^{\ast}\, H_m\, U_{m-1} )\, ,
\
U_m^{\ast} H_{m+1}\, U_{m+1} = 0\, .
\end{equation*}
\label{Th:kernel}
\end{Theorema}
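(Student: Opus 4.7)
The strategy is induction on $m$, organized around the Taylor expansion of the analytic eigenvalue equation $\LL(\la)u(\la)=\mu(\la)u(\la)$ at $\la_0$. For a branch with $\mu^{(s)}(\la_0)=0$ for $s=0,\dots,m-1$ and eigenvector normalized so that $u(\la_0)=u^{[0]}\in U_0$, I would expand both sides to order $m$ and take the $X$-inner product with $u^{[0]}$. Self-adjointness of $\LL(\la_0)$ annihilates the contribution $(u^{[0]},\LL(\la_0)u^{(m)}(\la_0)/m!)$, leaving $\mu^{(m)}(\la_0)/m!$ expressed as a Hermitian form applied to $u^{[0]}$ with coefficients built from $\LL^{(j)}(\la_0)$ and the lower derivatives $u^{(k)}(\la_0)$, $0\le k<m$. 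The goal is to show this form equals $(u^{[0]},H_m u^{[0]})$ whenever $u^{[0]}\in U_{m-1}$; Sylvester's law of inertia then identifies $|K_m^\pm|$ with the positive/negative inertia of $U_{m-1}^{\ast}H_m U_{m-1}$, and the recursive condition $U_m^{\ast}H_{m+1}U_{m+1}=0$ follows at once from the definition \eq{Udef}.

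The first building block is the inversion of the differentiated eigenvalue equation. Since $\LL(\la_0)$ is self-adjoint and Fredholm of index zero, $X=\Ker\LL(\la_0)\oplus\Ran\LL(\la_0)$, and each derivative splits uniquely as $u^{(k)}(\la_0)=\Lin\cdot(\text{lower-order known terms})+\Pi u^{(k)}(\la_0)$, with $\Pi u^{(k)}(\la_0)\in\Ker\LL(\la_0)$ a free gauge parameter. Iterative substitution of these expressions into the $m$-th order identity, followed by expansion and repeated use of $\Lin\Pi=-\Pi$, produces exactly the multi-index sum \eq{Lamdef} defining $\La_m$ as the piece independent of the gauge parameters, plus a correction bilinear in the $\Pi u^{(k)}$. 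This is the pattern already visible in Example~\ref{ex:1B}, where $\mu'''(0)=(u,\La_3 u)-(\Pi u',L\Pi u')$.

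The inductive hypothesis is that $U_{j}$, for $j<m$, coincides with the set of $u^{[0]}\in U_0$ whose associated branches satisfy $\mu^{(s)}(\la_0)=0$ for all $s\le j$, and that the gauge choices $\Pi u^{(k)}$ for $k<m-1$ have been fixed (modulo the kernel of the previous quadratic forms) by the successive $\hKer$ constraints in \eq{Udef}. On $U_{m-1}$ only $\Pi u^{(m-1)}$ remains as a surviving free parameter at stage $m$, and the bilinear remainder reorganizes precisely into $(u^{[0]},\De_m u^{[0]})$, where the $D^{\alpha_1}\Pi\,\La_{\alpha_2}\,\Pi D^{\alpha_3}$ pattern of \eq{Lan} records the three places a $\Pi$-component can enter: through the two ends as $\Pi u^{(\alpha_1)}$ and $\Pi u^{(\alpha_3)}$, sandwiching the $\La$-block of order $\alpha_2$ accumulated from the genuine action of $\LL$-derivatives on the $\Ran\LL(\la_0)$ part. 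Any residual dependence on unfixed $\Pi u^{(k)}$ lies in $U_0\cap U_{m-1}^{\perp}$ and is therefore killed by the restriction $U_{m-1}^{\ast}(\cdot)U_{m-1}$.

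The hardest part, I expect, is the combinatorial bookkeeping that matches the Leibniz-type expansion of derivatives of the eigenvalue equation with the precise multi-index definition of $\La_m$ and $\De_m$, together with the verification that the choice of gauge parameters $\Pi u^{(k)}$ genuinely drops out under the restriction to $U_{m-1}$. This is where the nontrivial content of the theorem lives, since it is what guarantees that the graphical quantities $|K_m^\pm|$ are intrinsic invariants of $\LL$ at $\la_0$ and not artifacts of a particular construction of the chain of root vectors; in effect, it is the rigorous explanation of the \emph{Note} following Example~\ref{ex:1B} that the chain $(u(0),u'(0),u''(0)/2,\dots)$ generated by derivatives of the eigenvector branch carries strictly more information than an arbitrary canonical chain.
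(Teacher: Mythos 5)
Your overall strategy coincides with the paper's: induction on $m$, repeated differentiation of $\LL(\la)u_i(\la)=\mu_i(\la)u_i(\la)$, recursive elimination of the derivatives $u_i^{(s)}$ through $\Lin$ and $\Pi$, and identification of the gauge-independent part with $\La_m$ and the $\Pi$-dependent remainder with $\De_m$. But there is a genuine gap at the decisive step. You evaluate the resulting Hermitian form only at the single vector $u^{[0]}$, obtaining $(u^{[0]},H_m u^{[0]})=\mu^{(m)}(\la_0)/m!$, and then invoke Sylvester's law of inertia. Knowing the diagonal entries of a Hermitian matrix in some basis does not determine its inertia: you must also show that the off-diagonal entries vanish, i.e., that the branch eigenvectors $\{u_i(\la_0)\}$ with $\mu_i\in K_{m-1}^0$ actually diagonalize $U_{m-1}^{\ast}H_m U_{m-1}$. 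The paper does this by pairing the $m$-times differentiated equation for $u_i$ with every $u_j$ (not just $j=i$), and the reduction of the cross terms $(u_j,\La_{m-s}\Pi u_i^{(s)}/s!)$ is carried out by substituting the lower-order identity \eq{pp91} satisfied by $u_j$; it is exactly this two-sided substitution that generates the three-factor structure $D^{\alpha_1}\Pi\,\La_{\alpha_2}\,\Pi D^{\alpha_3}$ of $\De_m$ in \eq{Lan} and yields $(u_j,H_m u_i)=\mu_i^{(m)}\delta_{ij}/m!$ as in \eq{pp11}. Your sketch only accounts for $\Pi$-components entering from the $u_i$ side, so it can produce neither $\De_m$ in its symmetric form nor the diagonality on which the inertia count rests.

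A secondary point: your description of the $\Pi u^{(k)}$ as free gauge parameters that must ``drop out'' under restriction to $U_{m-1}$ is at odds with the theorem and with the Note following Example~\ref{ex:1B}. In the paper's argument the branches $u_i(\la)$ are fixed by Assumption~\ref{a:branches}, their derivatives are determined rather than chosen, and the components $\Pi u_i^{(s)}$ do \emph{not} disappear --- they survive inside $\De_m$ and contribute to the count. That this particular chain of derivatives carries strictly more information than an arbitrary chain of root vectors is precisely the point being made; an argument that gauges the $\Pi u_i^{(s)}$ away would prove a different (and in general false) formula with $H_m=\La_m$.
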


\begin{proof}
We prove Theorem~\ref{Th:kernel} by mathematical induction for $m \ge 1$. Without loss of generality we set $\la_0 = 0$. 

First, let $m=1$. Let us fix $\mu_i \in K_0^0$. Then 
\begin{equation}
(\LL(\la) - \mu_i(\la)) u_i(\la) = 0 \, .
\label{pp1}
\end{equation}
Differentiation of \eq{pp1} with respect to $\la$ at $\la = \la_0$, where $\mu_i(0) = 0$
and a scalar product with $u_j$ such that $\mu_j \in K_0^0$ yields
\begin{equation}
\left(u_j, (\LL'- \mu_i') u_i \right) +(u_j,  \LL u_i') = 0\, . 
\label{pp3}
\end{equation}
where for a notational ease we drop the argument of $\LL$, $\mu$ and $u$ and their derivatives. 
The second term in \eq{pp3} vanishes as $(u_j,  \LL u_i') = (\LL u_j,  u_i') = 0$. Therefore
\begin{equation}
\left(u_j, \LL'  u_i\right) = 
\mu_i' \left(u_j, u_i \right) = \mu_i'\delta_{ij} \, , 
\label{pp4}
\end{equation}
with $\delta_{ij} = 1$ for $i = j$ and $\delta_{ij} = 0$ for $i \neq j$. 
Consequently, the matrix $\left(u_j, \LL'  u_i \right)$, $u_i, u_j \in \Ker \LL(0)$, $j = 1, \dots, k$,  
is diagonal with its eigenvalues on a diagonal. 
The number of its positive, negative and zero eigenvalues is independent of a choice of basis of $\Ker\LL (0)$ 
and it is determined by the signature of  the quadratic form $(\cdot, \LL' \cdot)$ on $\Ker \LL (0)$.
Since $\La_1 = \LL'(0)$ and $\De_1 = 0$ we derived
$$
|K_1^+| = p\, ( U_{0}^{\ast} \,H_1\, U_0 )\, ,
\qquad
|K_m^-| = n\, ( U_{0}^{\ast}\, H_1\, U_0 )\, .
$$
Let $h \in \Ker  ( U_{0}^{\ast} \,H_1\, U_0)$. Then $U_{0}^{\ast} \,H_1\, U_0 h = 0$ and 
 if $v \in U_0 \Ker  ( U_{0}^{\ast} \,H_1\, U_0) = U_1$ also $ U_{0}^{\ast} \,H_1\, U_1 = 0$.

Now assume that the statement of the theorem holds for all $j$, $j \le m$.
Let $\mu_i \in K_{m-1}^0$. Differentiation of 
\eq{pp1} $m$-times in $\la$  at $\la = 0$ together with $\mu_i(0) =\dots = \mu_i^{(m-1)}(0) = 0$  gives
\begin{equation}
\sum_{j=0}^{m} 
\frac{\LL^{(m-j)}}{(m-j)!} \,
\frac{u_i^{(j)}}{j!} = 
\frac{\mu_i^{(m)}}{m!} u_i\, .
\label{pp5}
\end{equation}
Also,
\begin{equation}
\sum_{j=0}^{s} 
\frac{\LL^{(s-j)}}{(s-j)!} \,
\frac{u_i^{(j)}}{j!} = 0
\label{pp6}
\end{equation}
for $s = 1, \dots, m-1$. Adding the term 
$\Pi u_i^{(s)} / s!$ to both sides of \eq{pp6} and inverting the operator $(\LL + \Pi)$ yields an expression for $u_i^{(s)}$. 
Now we rewrite \eq{pp5} as 
\begin{equation}
\frac{\LL^{(m)}}{(m)!} \,
u_i + 
\sum_{j=1}^{m} 
\frac{\LL^{(m-j)}}{(m-j)!} \,
\frac{u_i^{(j)}}{j!} = 
\frac{\mu_i^{(m)}}{m!} u_i\, .
\label{pp8}
\end{equation}
and express recursively each term  ${u_i^{(s)}}/{s!}$, $1 \le s  < m$, that does not contain the projection operator $\Pi$  until all the terms in the sum on the right hand side contain either $u_i$ or a projection operator $\Pi$. 
Since the total number of derivatives of $\LL$ and $u_i$ at $\la = 0$ in each term is equal $m$, and all possible decompositions of $m$ in to a sum appear, equation \eq{pp8}  reduces to
\begin{equation}
\La_m u_i - \sum_{s=1}^{m-1} \La_{m-s} \Pi \frac{u_i^{(s)}}{s!} + \LL \frac{u_i^{(m)}}{m!} = \frac{\mu_i^{(m)}}{m!} u_i\, .
\label{pp9}
\end{equation}
Taking scalar product of \eq{pp9} with  $u_j$ such that $\mu_j \in  K_{m}^0$ yields
\begin{equation}
(u_j,  \La_m u_i) - \sum_{s=1}^{m-1} (u_j, \La_{m-s}  \Pi \frac{u_i^{(s)}}{s!}) + (u_j, \LL \frac{u_i^{(m)}}{m!}) = \frac{\mu_i^{(m)}}{m!} \delta_{ij} \, .
\label{pp10}
\end{equation}
Since $\mu_j \in K_m^0$, the root vector $u_j$ can be also expressed for any $p < m$ as (compare to \eq{pp9})
\begin{equation}
\La_p u_j - \sum_{s=1}^{p-1} \La_{p-s}  \Pi \frac{u_j^{(s)}}{s!} + \LL \frac{u_j^{(p)}}{p!} =0\, .
\label{pp91}
\end{equation}
Then each individual term in the 
summand in the second term on the left-hand side of \eq{pp10} can be reduced to
\begin{eqnarray*}
(u_j, \La_{m-s}  \Pi \frac{u_i^{(s)}}{s!})  &=& (\La_{m-s} u_j,   \Pi \frac{u_i^{(s)}}{s!}) \\ 
&  = &  \sum_{r=1}^{m-s-1} (\La_{m-s-r}  \Pi \frac{u_j^{(r)}}{r!},  \Pi \frac{u_i^{(s)}}{s!}) - 
(\LL \frac{u_j^{(p)}}{p!}, \Pi \frac{u_i^{(s)}}{s!})\\
& = &  \sum_{r=1}^{m-s-1} (\La_{m-s-r}  \Pi \frac{u_j^{(r)}}{r!},  \Pi \frac{u_i^{(s)}}{s!}) 
\end{eqnarray*}
Since $ (u_j, \LL u_i^{(m)}/{m!}) = 0$ the expression in \eq{pp10} can be rewritten as 
\begin{equation}
(u_j, H_m  u_i)  = 
(u_j,  \La_m u_i) + (u_j, \De_m u_i) = \frac{\mu_i^{(m)}}{m!} \delta_{ij}\, .
\label{pp11}
\end{equation}
Therefore the matrix $U_m^{\ast} H_m U_m$ is diagonal and 
\begin{equation*}
|K_{m+1}^+|  = p\, (U_m^{\ast} H_m U_m)\, , \
|K_{m+1}^-|  = n\, (U_m^{\ast} H_m U_m)\, , \
|K_{m+1}^0|  = z\, (U_m^{\ast} H_m U_m)\, .
\end{equation*}
Multiplication of \eq{pp11} by $h \in \Ker (U_m^{\ast}  H_m U_m )$ gives
$U_m^{\ast} H_m U_m  h = 0$ that implies 
$U_m^{\ast} H_m U_{m+1}  = 0$.
\end{proof}

{\it Note.}
According to \eq{Lan} we have $\De_1 = \De_2 = 0$. 
Also,  there  are two terms $z(\LL)$ and $Z^{\downarrow}_{0^+}$ on the left hand side of \eq{indexLM3}
that are connected with the properties of (generalized) kernel of $\LL$. 
It is easy to see that under an assumption $\Ker(M) \subset \Ker(L)$ one has
$z(\LL) = 2 \dim (\Ker(M))$ that leads to the simplified expression in \eq{nLD} (see \cite{KollarH} for the details).

\section{Conclusions}
We presented a unifying view of the index theorems frequently used across various fields. Furthermore, we demonstrated a special property of the chain of root vectors generated by the graphical method that allowed us to derive formulae for the number of 
eigenvalue curves of the eigenvalue problem $\LL(\la)u = \mu u$ entering the lower half-plane of the plane $(\mu, \la)$ through the 
characteristic value $\la_0$ of $\LL(\la)$ that did not require knowledge of the full chain of the root vectors. 
Both these results demonstrate the extraordinary beauty and power of the graphical approach. 
Let us conclude by a quote from \cite{BinVol1996}: \emph{``Eigencurves} (produced by the graphical approach)
\emph{seem to provide a very useful tool in a variety of circumstances, and their theory and applications are quite underdeveloped"}, 
a statement that certainly remains true even today. 

\section{Acknowledgment}
R.~K.~would like to thank the European Commission for the financial support via the Marie Curie International Reintegration Grant 239429.

\end{document}